\documentclass[arxiv,reqno,twoside,a4paper,12pt]{amsart}

\usepackage{amsmath, verbatim}
\usepackage{amssymb,amsfonts,mathrsfs,mathtools}
\usepackage[colorlinks=true,linkcolor=blue,urlcolor=blue,citecolor=blue]{hyperref}
\usepackage[driver=pdftex,heightrounded=true,centering]{geometry}
\usepackage{longtable, slashed}
\usepackage{tabularx}
\usepackage{multirow}
\usepackage{caption}
\usepackage{latexsym,enumitem}
\usepackage[all]{xy}
\usepackage[latin2]{inputenc}
\usepackage{t1enc, indentfirst}
\usepackage{graphicx, graphics}
\usepackage{pict2e, epic, amssymb }
\usepackage{tikz, pst-node}
\usepackage{tikz-cd, pgfplots}
\usetikzlibrary{arrows}
\usetikzlibrary{calc, patterns}
\pgfplotsset{compat=1.9}
\usepackage{epstopdf}
\usepackage{float}

\usepackage[toc]{appendix}
\usepackage[mathbf]{euler}

\usepackage[UKenglish]{babel}
\usepackage{tikz}
\usepackage{tikz-3dplot}

\usepackage{diagmac2} 
\usepackage{array} 
\usetikzlibrary{positioning}
\usetikzlibrary{calc}
\usetikzlibrary{through}
\usepackage{subcaption}
\allowdisplaybreaks 

\usepackage{xcolor}

\usepackage[scaled]{helvet} 
\usepackage{courier}

\linespread{1.05}

\theoremstyle{break}
\newtheorem{thm}{Theorem}[section]
\newtheorem{lem}[thm]{Lemma}%
\newtheorem{cor}[thm]{Corollary}
\newtheorem{defn}[thm]{Definition}
\newtheorem{rmk}[thm]{Remark} 

\DeclareMathOperator{\Id}{id}

\DeclareMathOperator{\divergence}{div}

\DeclareMathOperator{\Span}{span}
\DeclareMathOperator{\dvol}{dvol}
\DeclareMathOperator{\di}{d}

\DeclareMathOperator{\op}{op}
\DeclareMathOperator{\ff}{ff}
\DeclareMathOperator{\fd}{fd}
\DeclareMathOperator{\lf}{lf}
\DeclareMathOperator{\rf}{rf}
\DeclareMathOperator{\td}{td}
\DeclareMathOperator{\tb}{tb}
\DeclareMathOperator{\vol}{vol}

\setcounter{tocdepth}{1}
\numberwithin{equation}{section}

\definecolor{qqwuqq}{rgb}{0,0,0}

\begin{document}

\title[Schauder estimates on $\Phi$-manifolds]{Heat-type Equations on manifolds \\ with fibered boundaries I: \\ Schauder estimates}

\author{Bruno Caldeira}
\address{Universidade Federal de S\~{a}o Carlos, Brazil}
\email{brunoccarlotti@gmail.com}
\email{brunocarlotti@estudante.ufscar.br}

\author{Giuseppe Gentile}
\address{Universit\"{a}t Hannover, Germany}
\email{giuseppe.gentile@math.uni-hannover.de}

\subjclass[2020]{58J35; 35K05; 35K58}

\maketitle
\begin{abstract} 
In this paper we prove parabolic Schauder estimates for the Laplace-Beltrami operator on a 
manifold $M$ with fibered boundary and a $\Phi$-metric $g_\Phi$. This setting generalizes the 
asymptotically conical (scattering) spaces and includes special cases of magnetic and gravitational
monopoles.  
This paper, combined with part II, lay the crucial groundwork for forthcoming discussions on
geometric flows in this setting; especially the Yamabe- and mean curvature flow.
\end{abstract}

\tableofcontents

\section{Introduction and statement of the main results}\label{Introduction}

In this work we study the existence and regularity of solutions to certain partial differential equations on a specific class of manifolds with fibered boundary usually referred to as $\Phi$-manifolds.

\medskip
Here, by manifold with fibered boundary, we mean a compact manifold $\overline{M}$ with boundary $\partial \overline{M}$, where $\partial \overline{M}$ is the total space of a fibration $\phi:\partial \overline{M}\rightarrow Y$ over a closed (i.e. compact without boundary) Riemannian manifold $Y$.  
Furthermore, the fibers of the fibration $\phi$ are copies of a fixed closed manifold $Z$.
We say that the open interior $M$ of a manifold with fibered boundary $\overline{M}$ is a $\Phi$-manifold if it is equipped with what is known as a $\Phi$-metric.
Let a trivialization of the boundary of $\overline{M}$ be fixed, that is one chooses an identification with $\partial\overline{M}\times [0,\varepsilon)$.
A $\Phi$-metric $g_{\Phi}$ is a Riemannian metric on $M$ so that, when approaching the boundary $\partial\overline{M}$, it has asymptotic behavior described by 
\begin{equation}\label{FirstEquation}
g_{\Phi} = \dfrac{\di x^{2}}{x^{4}} + \dfrac{\phi^{*}g_{Y}}{x^{2}} + g_{Z} \; + h,
\end{equation}
where $h$ is the collection of cross-terms and it holds extra $x$ powers in each of its terms. In the above, $g_{Y}$ is a Riemannian metric on the base $Y$, while $g_{Z}$ is a symmetric bilinear form on $\partial \overline{M}$ which restricts to a Riemannian metric at each fiber. 
\medskip

The space $\mathbb{R}^m$ equipped with the Euclidean metric expressed in polar coordinates 
$$g=\di r^2+r^2\di \theta$$
is a noticeable example of a $\Phi$-manifold.
At a first glance the above metric $g$ may not resemble a metric of the form \eqref{FirstEquation}; also $\mathbb{R}^m$ is a well known non-compact manifold.
But, if one performs the change of coordinates $x=r^{-1}$ then the singular region $\{r=\infty\}$ "moves" to the origin $\{x=0\}$ resulting, in particular, in a boundary.
In particular the Euclidean metric expressed in the coordinates $(x,\theta)$ is of the form
$$g=\frac{\di x^2}{x^4}+\frac{\di \theta^2}{x^2}.$$
By comparing it with \eqref{FirstEquation} one notices that the term $g_Z$, as well as $h$, are vanishing. 
This leads to the conclusion that the metric can be described without the need of a $Z$ component, therefore implying that the resulting boundary is an example of a trivial fibration. 
Other noticeable example of $\Phi$-manifolds are: several complete Ricci-flat metrics, products of locally Euclidean spaces with a compact manifold and some classes of gravitational instantons.
\medskip

Although the concept of $\Phi$-manifolds may be traced back to works from the 1990's, they are a rather new field of investigation in geometric analysis, e.g. the analysis of geometric flows such as Yamabe-, Ricci-, mean curvature flow (just to name a few).
It is well known that the analysis of geometric flows relies on the analysis of certain parabolic partial differential equations. 
In particular those are of the form
\begin{equation}\label{baseq}
(\partial_{t} + a\Delta)u = \ell, \;\;u|_{t=0}=u_0,
\end{equation}
for some suitable functions $\ell$ and $a$ and $u_0$.  Thus here we focus our attention to the analysis of such PDE's.
\medskip

This work can be thought as a preparation for the analysis of the Yamabe- and the mean curvature flows on $\Phi$-manifolds, which will be presented in forthcoming works by the authors.  
This paper is the first of a two-parts work. 
It consist of a derivation of mapping properties for the heat-kernel operator $\mathbf{H}$.
Mapping properties of the heat-kernel will lead to solution of the differential equation \eqref{baseq} in the special case $a = 1$. 
The more general case, i.e. with $a$ being a function, will be presented by the same authors in part II of this work.

\subsection{Main results and structure of the paper}

In \S \ref{GeometrySection} we give a brief introduction to $\Phi$-manifolds along with their basic smooth structure, i.e. $\Phi$-vector fields, $\Phi$-differential operators.
Also, we introduce a suitable notion of H\"{o}lder regularity.  
In \S \ref{maxsection}, we prove $\Phi$-manifolds to be stochastically complete.
Stochastic completeness is a fact that will be extensively exploited in order to obtain the key result of this work, that is the mapping properties for the heat-kernel $\mathbf{H}$ presented below.
A quick overview of the heat space $\overline{M}^2_h$ (based on \cite{VerTal}) along with the various regimes and their projective coordinates can be found in \S \ref{HeatSpaceSec}.
In \S \ref{LiftsSection} we recall a result obtained by \cite{VerTal} describing the asymptotic behavior of the heat-kernel $\mathbf{H}$ on the various regimes.
\S \ref{MappingPropertiesSection} is devoted to presenting our first main result:

\begin{thm}\label{mappropH}
	Let $(M,g_{\Phi})$ be the open interior of a smooth compact manifold with fibered boundary endowed with a $\Phi$-metric $g_{\Phi}$ as in Definition \ref{phimetric}. Furthermore, consider any $\alpha \in (0,1)$, $T>0$ and any $\gamma \in \mathbb{R}$.  Then the heat-kernel operator $\mathbf{H}$ acts as a bounded map between weighted H\"{o}lder spaces
	\begin{align*}
	   &\mathbf{H}:x^{\gamma}C^{k,\alpha}_{\Phi}(M\times[0,T])\rightarrow x^{\gamma}C^{k+2,\alpha}_{\Phi}(M\times [0,T]), 
	\end{align*}
for every $k \in \mathbb{N}$, with the H\"{o}lder spaces $C^{k,\alpha}_{\Phi}(M\times [0,T])$ as defined below in \eqref{Holdernoweights}.
Moreover, a decrease in regularity on the target space yields additional weights as follows
\begin{align*}
	   &\mathbf{H}:x^{\gamma}C^{k,\alpha}_{\Phi}(M\times[0,T])\rightarrow \sqrt{t}x^{\gamma}C^{k+1,\alpha}_{\Phi}(M\times[0,T]), \\
	   &\mathbf{H}:x^{\gamma}C^{k,\alpha}_{\Phi}(M\times [0,T])\rightarrow t^{\alpha/2}x^{\gamma}C^{k+2}_{\Phi}(M\times [0,T]),
	\end{align*}
where $C^{k}_{\Phi}(M\times [0,T])$ is the space of functions whose $\Phi$-derivatives up to order $k$ are continuous on 
$\overline{M}\times [0,T]$.
\end{thm}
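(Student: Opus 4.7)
The plan is to exploit the explicit polyhomogeneous structure of the heat kernel $\mathbf{H}$, lifted to the blown-up heat space $\overline{M}^2_h$ as recalled in \S\ref{LiftsSection} following \cite{VerTal}. The convolution
\[
\mathbf{H}u(p,t)=\int_0^t\!\!\int_M H(p,q,t-s)\,u(q,s)\,\dvol_{g_\Phi}(q)\,ds
\]
is analyzed by decomposing the domain of integration according to the different regimes of $\overline{M}^2_h$: the temporal diagonal face, the front face over the fibered boundary, and the two side faces. On each regime the lifted kernel is a polyhomogeneous conormal distribution with explicitly known leading exponents in the corresponding projective coordinates, so the mapping properties reduce to checking integrability of these local models after suitable changes of variables.

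First I would settle the $k=0$ case. The $L^\infty$ bound follows from stochastic completeness, proved in \S\ref{maxsection}, which gives $\int_M H(p,q,\tau)\,\dvol_{g_\Phi}(q)=1$ and hence $\|\mathbf{H}u\|_\infty\le T\|u\|_\infty$. Preservation of the weight $x^\gamma$ uses that in the projective coordinates near the front face the ratio $x/x'$ is bounded between two positive constants, so $x^\gamma$ can be swapped for $(x')^\gamma$ at the price of a harmless bounded factor that is absorbed into the kernel. The $C^{0,\alpha}_\Phi$ bound on differences $\mathbf{H}u(p_1,t_1)-\mathbf{H}u(p_2,t_2)$ is obtained by splitting according to whether the $\Phi$-parabolic distance between the two points is small or large compared to the distance to $\partial\overline{M}$, and applying the mean value theorem in the projective coordinates in the small-distance case.

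Next I would leverage the fact that each $\Phi$-differentiation of $H$ in the first slot raises the order at the temporal diagonal by one. Two such derivatives yield a kernel with a $\tau^{-1}$-type singularity at $\tau=0$, which is borderline non-integrable; the remedy is to rewrite $\nabla^2_\Phi H\ast u$ using the Hölder difference $u(q,s)-u(p,s)$, so that the extra factor $d_\Phi(p,q)^\alpha$ restores integrability. This delivers $\mathbf{H}:x^\gamma C^{k,\alpha}_\Phi\to x^\gamma C^{k+2,\alpha}_\Phi$. For the intermediate statements one stops one derivative earlier, producing the factor $\int_0^t\tau^{-1/2}d\tau = 2\sqrt t$ for the $C^{k+1,\alpha}_\Phi$ target, or estimates time differences of second $\Phi$-derivatives directly, producing the loss $t^{\alpha/2}$ for the $C^{k+2}_\Phi$ target. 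Weight preservation and boundary regularity in all three cases follow from the same front-face argument as in the $k=0$ case, since $\Phi$-differentiation respects the structure of $\overline{M}^2_h$.

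The main obstacle, and the heart of the argument, is obtaining the top-order Hölder estimate uniformly up to the front face and the fibered boundary faces of $\overline{M}^2_h$. The difficulty is that the $\Phi$-Hölder seminorm is measured with respect to the $\Phi$-distance, which degenerates as $x\to 0$, while the lifted kernel only behaves polynomially in the projective coordinates adapted to each face. Closing this estimate requires a Calderón--Zygmund type argument intrinsic to the $\Phi$-geometry: one exploits the cancellation $\int_M \nabla^2_\Phi H(p,\cdot,\tau)\,\dvol_{g_\Phi}=0$ inherited from stochastic completeness, a dyadic decomposition in the projective coordinates, and uniform kernel bounds coming from the polyhomogeneous expansions of \S\ref{LiftsSection}, in order to verify a Hörmander-type condition for the kernel of $\nabla^2_\Phi\mathbf{H}$. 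Once this is in place, the three mapping statements follow in a unified manner by tracking the surviving powers of $\tau$ in each regime.
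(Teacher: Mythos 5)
Your proposal follows essentially the same route as the paper: reduction to $k=0$ and conjugation by $x^{\gamma}$, the polyhomogeneous description of the lifted kernel on $M^2_h$ in projective coordinates as the source of all kernel bounds, stochastic completeness supplying both the sup-norm bound and the cancellation $\int_M VH\,\dvol_\Phi=0$ used to handle the terms where $u$ is frozen at $p$, the near/far splitting with the mean value theorem on the far region, and the insertion of the H\"older difference $u(\widetilde{p},\widetilde{s})-u(p,\widetilde{s})$ to cure the borderline $\tau^{-1}$ singularity of $\mathcal{V}^2_\Phi H$ at the temporal diagonal. What you phrase as a Calder\'on--Zygmund/H\"ormander-condition argument is exactly the paper's $M^{+}/M^{-}$ decomposition with integration by parts in the $\mathcal{S}$-variable, so the two arguments coincide in substance.
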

The proof for Theorem \ref{mappropH} will be carried over throughout \S \ref{HDSSection}, \S \ref{HDTSection} and \S \ref{SNSection}, where the computations for parabolic Schauder estimates are presented.
We want to point out that the methods employed in our proof are closely related to the one introduced by \cite{NEW} (in the setting of $b$-manifolds) consisting in the manifolds with corner description of the heat kernel to derive H\"{o}lder regularity through Schauder estimates (see also \cite[\S 3]{bahuaud2014yamabe}).  
Finally, in \S \ref{ShortTimeExistenceSec} will be presented a proof for the existence and regularity of solutions for a specific class of non-linear heat-type equations, 
that is tailored to the graphical mean curvature and Yamabe flows in this setting.

\begin{cor} \label{theorem4}
Let $\alpha\in (0,1)$ be fixed. 
Consider the Cauchy problem 
\begin{equation} \label{shorttime}
(\partial_{t} + \Delta)u = F(u), \;\;
u|_{t=0} = 0,
\end{equation}
Assume the map $F:x^{\gamma}C^{k+2,\alpha}_{\Phi}(M\times [0,T])\rightarrow C^{k,\alpha}_{\Phi}(M\times [0,T])$ to satisfy the following conditions: one can write $F = F_{1} + F_{2}$, with 
	\begin{enumerate}
	    \item $F_{1}:x^{\gamma}C^{k+2,\alpha}_{\Phi}\rightarrow x^{\gamma}C^{k+1,\alpha}_{\Phi}(M\times [0,T]),$
	    \item $F_{2}:x^{\gamma}C^{k+2,\alpha}_{\Phi}\rightarrow x^{\gamma}C^{k,\alpha}_{\Phi}(M\times [0,T])$
	\end{enumerate}
	and, for $u,u' \in x^{\gamma}C^{k+2,\alpha}_{\Phi}(M\times [0,T])$ satisfying $\|u\|_{k+2,\alpha,\gamma},\|u'\|_{k+2,\alpha,\gamma} \le \mu$, exists some $C_{\mu}>0$ such that
	
	\begin{enumerate}
		\item $\|F_{1}(u) - F_{1}(u')\|_{k+1,\alpha,\gamma} \le C_{\mu}\|u-u'\|_{k+2,\alpha,\gamma}$, $\|F_{1}(u)\|_{k+1,\alpha,\gamma} \le C_{\mu},$
		\item $\|F_{2}(u) - F_{2}(u')\|_{k,\alpha,\gamma} \le C_{\mu}\max\{\|u\|_{k+2,\alpha,\gamma},\|u'\|_{k+2,\alpha,\gamma}\}\|u-u'\|_{k+2,\alpha,\gamma}$, \newline $\|F_{2}(u)\|_{k,\alpha,\gamma} \le C_{\mu}\|u\|^{2}_{k+2,\alpha,\gamma}.$
	\end{enumerate}
	Then there exists a unique $u^{*} \in x^{\gamma}C^{k+2,\alpha}_{\Phi}(M\times [0,T'])$ solution for (\ref{shorttime}) for some $T'>0$ sufficiently small.
\end{cor}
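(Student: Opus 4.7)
The plan is to recast the Cauchy problem \eqref{shorttime} as a fixed-point equation and then apply the Banach contraction principle on a suitable ball. Since $u|_{t=0}=0$, Duhamel's principle (encoded in the definition of $\mathbf{H}$) rewrites \eqref{shorttime} as
\[
u = \mathbf{H}F(u) =: \Psi(u).
\]
I would work in the Banach space $X := x^{\gamma}C^{k+2,\alpha}_{\Phi}(M\times [0,T'])$ on the closed ball
\[
B_\mu := \{\, u \in X : \|u\|_{k+2,\alpha,\gamma} \le \mu \,\},
\]
and determine $\mu$ together with $T'>0$ small enough that $\Psi$ is both a self-map of $B_\mu$ and a contraction on it.

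The key input is the refined mapping properties of $\mathbf{H}$ from Theorem \ref{mappropH}, used separately on the two pieces of $F=F_1+F_2$. Since $F_1(u) \in x^{\gamma}C^{k+1,\alpha}_{\Phi}$, the second estimate of Theorem \ref{mappropH} (with the index shifted up by one) places $\mathbf{H}F_1(u)$ in $\sqrt{t}\,x^{\gamma}C^{k+2,\alpha}_{\Phi}$, and on a finite interval this yields
\[
\|\mathbf{H}F_1(u)\|_{k+2,\alpha,\gamma} \le C\sqrt{T'}\,\|F_1(u)\|_{k+1,\alpha,\gamma} \le C\sqrt{T'}\,C_\mu.
\]
For $F_2(u) \in x^{\gamma}C^{k,\alpha}_{\Phi}$, the first (full-gain) estimate gives $\mathbf{H}F_2(u) \in x^{\gamma}C^{k+2,\alpha}_{\Phi}$ with
\[
\|\mathbf{H}F_2(u)\|_{k+2,\alpha,\gamma} \le C\,\|F_2(u)\|_{k,\alpha,\gamma} \le C\,C_\mu\,\mu^2.
\]
Adding these, $\|\Psi(u)\|_X \le C\,C_\mu\bigl(\sqrt{T'}+\mu^2\bigr)$; first choosing $\mu$ so small that $C\,C_\mu\,\mu\le 1/2$ (hence $C\,C_\mu\,\mu^2\le \mu/2$) and then $T'$ so small that $C\,C_\mu\,\sqrt{T'}\le \mu/2$ gives $\Psi(B_\mu)\subset B_\mu$.

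A parallel computation using the Lipschitz hypotheses on $F_1,F_2$ produces
\[
\|\Psi(u)-\Psi(u')\|_X \le \bigl(C\,C_\mu\,\sqrt{T'}+C\,C_\mu\,\mu\bigr)\,\|u-u'\|_X,
\]
and shrinking $\mu$ and $T'$ once more makes the prefactor strictly less than $1$. The Banach fixed-point theorem then delivers a unique $u^{*}\in B_\mu$ with $u^{*}=\mathbf{H}F(u^{*})$, which is the desired solution of \eqref{shorttime}. Uniqueness in the full space $X$ (not just in $B_\mu$) follows by a standard argument: any solution $v\in X$ with $v|_{t=0}=0$ is continuous in $t$ from zero, hence lies in $B_\mu$ on a possibly smaller interval, after which the contraction estimate applied to $w=u^{*}-v$ forces $w\equiv 0$.

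The principal delicate point is the absence of a $\sqrt{T'}$-type time factor in the bound for $\mathbf{H}F_2$: smallness of that contribution can only come from the quadratic growth $\|F_2(u)\|_{k,\alpha,\gamma}\le C_\mu\|u\|^{2}_{k+2,\alpha,\gamma}$. This is precisely why the hypotheses split $F$ into a piece with one derivative of slack (to which the $\sqrt{t}$-gain of $\mathbf{H}$ applies) and a quadratic piece (to which the full two-derivative gain applies). The order in which the parameters are chosen is therefore essential: $\mu$ must first be fixed small enough to tame the $F_2$-terms via the quadratic factor, and only then can $T'$ be fixed small enough to tame the $F_1$-terms via the $\sqrt{T'}$ factor.
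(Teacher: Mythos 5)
Your proposal is correct and follows essentially the same route as the paper: recast the problem as the fixed-point equation $u=\mathbf{H}F(u)$, split $\Psi=\mathbf{H}F_1+\mathbf{H}F_2$, apply the $\sqrt{t}$-gain mapping property to the $F_1$-piece and the full two-derivative-gain property to the quadratic $F_2$-piece, and close with Banach's fixed-point theorem on a small ball after choosing the radius and then the time interval. Your added remark on uniqueness in the full space (rather than just in the ball) is a small refinement the paper leaves implicit, but the core argument is the same.
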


\subsubsection*{Acknowledgements} The authors wish to thank Boris Vertman for the supervision as advisor for their Ph.D. theses.  The authors wish to thank the University of Oldenburg for the financial support and hospitality.  
The first author wishes also to thank the Coordena\c{c}\~{a}o de Aperfei\c{c}oamento de Pessoal de N\'{i}vel Superior (CAPES-Brasil- Finance Code 001) for the financial support (Process 88881.199666/2018-01).

\section{Geometry of fibered boundary manifolds}\label{GeometrySection}\label{PhiMfldsSect}

In this section, we briefly present some of the main concepts used throughout the whole paper. 
We refer to Mazzeo and Melrose \cite{mazzeo1998pseudodifferential}, as well as Talebi and Vertman \cite[\S 2]{VerTal}, for a more detailed and careful treatment of the subject.

\subsection{Manifolds with fibered boundary}\label{MFLDwithboundary}

The first step towards the definition of $\Phi$-manifolds is the definition of manifolds with fibered boundary.

\begin{defn}
Let $\overline{M}=M\cup \partial \overline{M}$ be a compact smooth manifold with boundary. 
We say that $\overline{M}$ is a manifold with fibered boundary if the boundary $\partial \overline{M}$ of $\overline{M}$ is the total space of a fibration, that is
\[\partial \overline{M} \xrightarrow{\phi} Y\]
with typical fiber $Z$ such that both $Y$ and $Z$ are closed manifolds
\end{defn}

Next we consider $(\partial\overline{M},g_{\partial\overline{M}})$ and $(Y,g_Y)$ to be Riemannian manifolds and the fibration $\phi$ to be a Riemannian submersion.
Recall that, $\phi$ is a Riemannian submersion if:
\begin{enumerate}
\item $\phi:\partial\overline{M}\rightarrow Y$ is surjective.
\item For every $p\in\partial\overline{M}$, $D_p\phi:T_p\partial\overline{M}\rightarrow T_{\phi(p)}Y$ is surjective.
\item For every $p\in\partial\overline{M}$, $D_p\phi:\ker(D_p\phi)^\perp\rightarrow T_{\phi(p)}Y$ is an isometry.
\end{enumerate}
Condition $(2)$ and $(3)$ allow for a "canonical" choice of a (Ehresmann) connection, i.e. a splitting of $T\partial\overline{M}$.
Let $T^V\partial\overline{M}=\ker(D\phi)$ be the vertical bundle.
An horizontal bundle $T^H\partial\overline{M}$ can be chosen to be the orthogonal complement of $T^V\partial\overline{M}$ with respect to $g_{\partial\overline{M}}$; thus leading to the splitting
$$T\partial\overline{M}=T^V\partial\overline{M}\oplus T^H\partial\overline{M}.$$
Also, condition $(3)$ implies 
$$g_{\partial\overline{M}}=\phi^*g_y+g_Z,$$
where $g_Z$ is a symmetric $(0,2)$-tensor on $\partial\overline{M}$ restricting to a positive definite tensor, i.e. a Riemannian metric, on each fiber.

We are now in the position to define a $\Phi$-metric.
Let $x$ be a choice of a boundary defining function for $\partial \overline{M}$, that is, $x\in C^{\infty}(\overline{M})$ is a non-negative function such that $\partial \overline{M} = \{p\in \overline{M}\,|\,x(p)=0\}$ and $Dx \neq 0$ on $\partial \overline{M}$.  
Since $\overline{M}$ is compact, there exists a collar neighborhood $U$ of $\partial \overline{M}$ in $\overline{M}$ such that $U \simeq [0,1) \times \partial \overline{M}$.  

\begin{defn}
A Riemannian metric $g_{\Phi}$ on the interior ${M}$ of $\overline{M}$ is called a $\Phi$-metric if, when restricted to the collar neighborhood $U$ above, it has an expression of the form
\begin{equation} \label{phimetric}
  g_{\Phi} = \dfrac{dx^{2}}{x^{4}} + \dfrac{\phi^{*}g_{Y}}{x^{2}} + g_{Z} + h =: \widehat{g} + h.
\end{equation}
As usual, $\widehat{g}$ is called the exact fibered boundary metric and $h$ is a perturbation (gathering all cross-terms in ${g_{\Phi}}$) such that $|h|_{\widehat{g}} = O(x)$ as $x\rightarrow 0$.
\end{defn}

From this point on, we will denote by $b$ the dimension of $Y$, by $f$ the dimension of $Z$, by $x$ a boundary defining function for $\partial\overline{M}$ and by $U$ a collar neighborhood of $\partial\overline{M}$, $U=[0,1)\times\partial\overline{M}$. 

\noindent
It is also important to point out the following.
Every point $p$ in $U$ can be parametrized by a pair $(x,w)$, with $x \in [0,1)$ and $w \in \partial \overline{M}$.  
Since $\partial \overline{M}$ is the total space of a fibration over the base space $Y$ with typical fiber $Z$, there is an open cover $\{V_{i}\}$ of $Y$ such that $\phi^{-1}(V_{i}) \simeq V_{i}\times Z$ for every $i$.
Thus, in such open subsets, every point can be written as a pair $(\widehat{y},\widehat{z})$.
It is, therefore, possible to write every point $p \in U$, locally, as the triple $p = (x,\widehat{y},\widehat{z})$. 
In conclusion, by means of the above identification, every point $p$ in $U$ has coordinates $p=(x,y^1,\dots,y^b,z^1,\dots,z^f)$, where $(y^1,\dots,y^b)$ and $(z^1,\dots,z^f)$ are coordinates for $\widehat{y}\in Y$ and $\widehat{z}\in Z$ respectively.
\begin{rmk}\label{CoordRemark}
Due to the abundance of indices, we will eventually use $y$ and $z$ to denote either the whole coordinate $(y^1,\dots,y^b)$ and $(z^1,\dots, z^f)$, respectively, or a generic coordinate element, i.e. $y=y^k$ for some $k$ and $z=z^j$ for some $j$.
\end{rmk}  
We want to conclude this section by noticing the following.
\begin{rmk}\label{OpenPhiMfld}
Although defined as compact manifolds with boundary, $\Phi$-manifolds model certain class of non-compact manifolds.
A prototypical example of this has already been discussed in \S \ref{Introduction}.
\end{rmk}

\subsection{$\Phi$-vector fields and $\Phi$-one forms} 

In the context of $\Phi$-manifolds, or in general of manifolds with boundary, one performs the analysis with respect to some specific set of vector fields. 
In particular, in the context of $\Phi$-manifolds, these ``well behaved'' vector fields, meaning that they take care of the singular nature of the metric tensor, are referred to as $\Phi$-vector fields and are defined as
\begin{align*}
    \mathcal{V}_{\Phi}(\overline{M}) = \left\{V \in \mathcal{V}(\overline{M})\,\bigg| \begin{array}{c}
         Vx \in x^{2}C^{\infty}(\overline{M}) \; \mbox{and} \\
         V_{p} \in T_{p}\phi^{-1}(\phi(p)) \; \mbox{for every} \; p \in \partial \overline{M}
    \end{array}\right\}.
\end{align*}
In local coordinates $(x,y,z)$ (cf. Remark \ref{CoordRemark})  near $\partial \overline{M}$,
\[\mathcal{V}_{\Phi}(\overline{M}) := \Span_{C^{\infty}(\overline{M})} \left\{x^{2}\partial_{x},x\partial_{y_{1}},...,x\partial_{y_{b}},\partial_{z_{1}},...,\partial_{z_{f}}\right\}.\]
The $\Phi$-tangent bundle $^{\Phi}T\overline{M}$ is, by definition, a vector bundle over $\overline{M}$ whose sections are given by $\mathcal{V}_{\Phi}(\overline{M})$.
\begin{rmk}
It is worth to point out that the inner product, i.e. the metric paring, of any two $\Phi$-vector fields is bounded.
\end{rmk}
Analogously to classical differential geometry, one can also consider the dual bundle $^{\Phi}T\overline{M}^{*}$, which is the bundle whose sections are differential forms on $\overline{M}$ generated by the family
\[\left\{\dfrac{\di x}{x^{2}},\dfrac{\di y_{1}}{x},...,\dfrac{\di y_{b}}{x},\di z_{1},...,\di z_{f}\right\}. \]

Once endowed with the family of $\Phi$-vector fields, it is reasonable to define the notion of $\Phi$-$k$-differentiability as
\begin{equation} \label{ckphi}
\begin{split}
&C^{1}_{\Phi}(\overline{M}) = \left\{u \in C^{0}(\overline{M})\; | \; Vu \in C^{0}(\overline{M}) \; \mbox{for every} \; V \in \mathcal{V}_{\Phi}(\overline{M})  \right\},\\
&C^{k}_{\Phi}(\overline{M}) = \left\{u \in C^{k-1}_{\Phi}(\overline{M})\; | \; Vu \in C^{k-1}_{\Phi}(\overline{M}) \; \mbox{for every} \; V \in \mathcal{V}_{\Phi}(\overline{M})\right\},
\end{split}
\end{equation}
where $k$ is an integer and $k\geq 2$.

Since $\mathcal{V}_{\Phi}(\overline{M})$ is a Lie algebra and a $C^{\infty}(\overline{M})$-module (see \cite{mazzeo1998pseudodifferential}), one can consider the algebra $\mbox{Diff}^{*}_{\Phi}(\overline{M})$ of higher order $\Phi$-differential operators.
This algebra consists of operators acting on $C^{\infty}(\overline{M})$ which can be written as a $C^\infty(\overline{M})$-linear combination of compositions of elements of $\mathcal{V}_{\Phi}(\overline{M})$.  
That is, we define the space $\mbox{Diff}^{k}_{\Phi}(\overline{M})$ of  $\Phi$-differential operators of order $k$, also denoted by $\mathcal{V}^{k}_{\Phi}$, as the space of linear operators $P:C^{\infty}(\overline{M})\rightarrow C^{\infty}(\overline{M})$ which can be locally expressed by
    \[P = \displaystyle\sum_{|\alpha| + |\beta| + q \le k} P_{\alpha,\beta,q}(x,y,z)(x^{2}\partial_{x})^{q}(x\partial_{y})^{\beta}\partial_{z}^{\alpha},\]
where $\alpha$ and $\beta$ are multi-indices, $\partial_{y} = \partial_{y_{1}},\dots,\partial_{y_{b}}$, $\partial_{z} = \partial_{z_{1}},\dots,\partial_{z_{f}}$ and $P_{\alpha,\beta,q}$ is a smooth function.

\subsection{H\"{o}lder continuity on $\Phi$-manifolds} \label{HoldPhiSect}

The analysis of PDEs relies upon the choice of suitable functional spaces. 
For our purposes, H\"{o}lder spaces and slight variations (weighted H\"{o}lder spaces) are needed.
Although we consider spaces similar to classical H\"{o}lder spaces, the H\"{o}lder spaces presented here encode the singular behavior of $\Phi$-metrics.
\medskip

Analogously to the spaces defined in \eqref{ckphi}, let us denote by $C^{k}_\Phi(M\times [0,T])$ the space of functions that, together with their $\Phi$-derivatives up to order $k$, are continuous on $\overline{M} \times [0,T]$. 
We want to point out, however, that time derivatives will be considered as second order derivatives.
For $\alpha \in (0,1)$, we define the $\alpha$-norm as the map $\|\cdot\|_{\alpha}:C^{0}(M\times [0,T])\rightarrow [0,\infty)$ given by
\begin{equation}\label{alphanorm}
    \|u\|_{\alpha}  = \|u\|_{\infty} + \mbox{sup}\left\{\dfrac{|u(p,t) - u(p',t')|}{d(p,p')^{\alpha} + |t-t'|^{\alpha/2}}\right\} =:  \|u\|_{\infty} + [u]_{\alpha}.
\end{equation}
The distance between $p$ and $p'$, appearing on the denominator of \eqref{alphanorm}, is defined in terms of $x^{4}g_{\Phi}$ and it is,  locally near the boundary, equivalent to
\begin{equation}\label{dist-func}
    d(p,p') = \sqrt{|x-x'|^{2}+(x+x')^{2}\|y-y'\|+(x+x')^{4}\|z-z'\|^{2}}.
\end{equation}
We define the $\alpha$-H\"{o}lder continuous functions as the space of functions continuous up to the boundary $\partial \overline{M}$ and whose $\alpha$-norm is bounded. 
That is
\[C^{\alpha}_{\Phi}(M\times [0,T]) := \{u \in C^{0}(\overline{M}\times [0,T])\,|\, \|u\|_{\alpha}<\infty\}.\]

Once endowed with the $\alpha$-norm \eqref{alphanorm}, this functional space turns into a Banach space. 
Higher order H\"{o}lder regularity is defined as follows.
For $k,l_{1}$ and $l_{2}$ being non-negative integers, the ($k,\alpha$)-H\"{o}lder space is given by
\begin{equation}\label{Holdernoweights}
    C^{k,\alpha}_{\Phi}(M\times [0,T]) = 
    \left\{
        u \in C^{k}_{\Phi}(M\times [0,T])\,\bigg|
        \begin{array}{l}  (\mathcal{V}^{l_{1}}_{\Phi}\circ \partial^{l_{2}}_{t})u \in C^{\alpha}_{\Phi}(M\times [0,T]),
           \\
        \mbox{for} \; l_{1} + 2l_{2} \le k 
        \end{array}\right\}
\end{equation}
From \cite[Proposition 3.1]{bahuaud2014yamabe} follows that the $(k,\alpha)$-H\"{o}lder space $C^{k,\alpha}_{\Phi}(M\times [0,T])$, when equipped with the norm 
\begin{equation*}
\|u\|_{k,\alpha} = \displaystyle\sum_{l_{1} + 2l_{2} \le k} \sum_{V \in \mathcal{V}^{l_{1}}_{\Phi}} \|(V\circ \partial_{t}^{l_{2}})u\|_{\alpha}
\end{equation*}
is also a Banach space.

\begin{rmk}
For every $0\le k_{1}\le k_{2}$ and for every $\alpha\in (0,1)$, one has 
$$C^{k_2,\alpha}_{\Phi}(M\times[0,T])\subset C^{k_1,\alpha}_{\Phi}(M\times[0,T]).$$
In particular, this means that, for every $k\ge 0$, $C^{k,\alpha}_{\Phi}(M\times[0,T])\subset C^{\alpha}_{\Phi}(M\times[0,T])$. 
\end{rmk}

Finally, for $\gamma$ a real number, one can define the weighted H\"{o}lder spaces as follow:
\begin{equation}
x^{\gamma}C^{k,\alpha}_{\Phi}(M\times [0,T]) = \{x^{\gamma}u\,|\, u \in C^{k,\alpha}_{\Phi}(M\times [0,T])\}
\end{equation}
On $x^{\gamma}C^{k,\alpha}_{\Phi}(M\times [0,T])$, consider then the modified norm \[\|x^{\gamma}u\|_{k,\alpha,\gamma}:= \|u\|_{k,\alpha}.\]
Whenever $\gamma\neq 0$, the above definition turns the multiplication by $x^{\gamma}$ into an isometry between $C^{k,\alpha}_{\Phi}(M\times [0,T])$ and $x^{\gamma}C^{k,\alpha}_{\Phi}(M\times [0,T])$, naturally implying that the weighted H\"{o}lder spaces are also Banach spaces.

\subsection{Classical H\"{o}lder spaces}

Due to the choice of the distance function $\di$ in \eqref{dist-func}, the H\"{o}lder spaces defined in \eqref{Holdernoweights} are quite "unnatural".
Here by "unnatural" we mean that the distance involved in the definition is not the distance induced by the $\Phi$-metric but rather by the conformal metric $x^4g_\Phi$; which is not representing the distance between two points in a $\Phi$-manifold.
The classical H\"{o}lder spaces are defined by taking $\di$ in \eqref{Holdernoweights} to be the distance induced by $g_\Phi$; that is
$$\di_\Phi(p,p')=\sqrt{\frac{|x-x'|^2}{(x+x')^4}+\frac{\|y-y'\|^2}{(x+x')^2}+\|z-z'\|^2}.$$
It is important to point out that both H\"{o}lder spaces are suitable functional spaces with a subtle difference.
In case of manifolds with boundary, the "unnatural" ones allow us to discuss continuity up to the boundary.
Thus leading to a better description of boundary behavior, a tool which will be useful in the analysis of geometric flows.
\medskip

In this work we derive mapping properties of the heat-kernel $\mathbf{H}$ in the unnatural H\"{o}lder spaces defined above.
It is important to point out that similar mapping properties to the one presented in \S \ref{MappingPropertiesSection} can be obtained, for classical H\"{o}lder spaces, by making use of parabolic Schauder and  Krylov-Safonov estimates, see e.g. \cite{krylov, KS, picard}. 

\noindent
Employing mapping properties of the parametrix for heat-type operators, allows for the analysis of various geometric flows.
The discussion above shows that, unnatural H\"{o}lder spaces are more suitable for the analysis of flows on manifolds with boundary.
Indeed one can deal with continuity up to the boundary, therefore providing more accurate bounds of geometric quantities at the boundary.
Classical H\"{o}lder spaces are instead more appropriate for gaining estimates in the interior.
Therefore they can be heavily employed in the analysis of geometric flows in open manifolds; see e.g. \cite{bruno, GeVe} for the analysis of the Yamabe- and the mean curvature flow in the setting of non-compact manifolds with bounded geometry.

\section{Stochastic Completeness} \label{maxsection}\label{StochPhiMflds}

As it will be proved later in this section, $\Phi$-manifolds, defined in \S\ref{PhiMfldsSect}, are an example of a much wider family of manifolds called stochastically complete manifolds.
\medskip

For convenience of the reader, we begin this section by setting up once and for all our convention for the Laplace-Beltrami operator, and we recall what a stochastically complete manifold is.

\begin{defn}
Let $(M,g)$ be an $m$-dimensional Riemannian manifold. 
For a function $u\in C^2({M})$, the Laplace-Beltrami operator is defined to be 
\begin{equation}\label{LaplacianConvention}
\Delta_{{g}} u=-\divergence\nabla u
\end{equation}
where $\divergence$ and $\nabla$ are the divergence and the gradient taken with respect to the metric tensor $g$ respectively.
In particular, given coordinates $(x^i)_{i=1,\dots,m}$, the local coordinate expression for the Laplacian is given by
\begin{equation}\label{lap-loc}
\Delta_{{g}} u=-\frac{1}{\sqrt{|g|}}\partial_i\left(\sqrt{|g|}{g}^{ij}\partial_j u\right).
\end{equation} 
In the above, $\partial_i$ is a short hand notation for $\partial/\partial x^i$, $|g|$ denotes the determinant of the metric tensor $g$ while ${g}^{ij}$ denotes its inverse.
It should also be noted that here and throughout the whole work, unless otherwise specified, we use the Einstein convention on repeated indices.
\end{defn}
\begin{defn}\label{StochCompleteDef} A Riemannian manifold $(M,g)$ is said to be stochastically complete if the heat kernel of the (positive) Laplace-Beltrami operator $\Delta$, associated to $g$ satisfies
\begin{equation}\label{StocComepleteIntegralDef}
\int_{{M}} H(t,p,\widetilde{p})\dvol_{g}(\widetilde{p})=1.
\end{equation}
\end{defn}
Stochastic completeness can be equivalently characterized by a volume growth condition, due to 
Grigor'yan \cite{grigor1986stochastically}, cf. also \cite[Theorem 2.11]{alias2016maximum}.

\begin{thm}\label{Gri}
Let $(M,g)$ be a complete Riemannian manifold. 
Consider for some reference point $p\in M$ the geodesic ball $B(p,R)$ of radius $R$ centered at $p$. 
If the function 
\begin{equation}\label{Grig'sFormula}
\overline{f}(\cdot) := \; \frac{\cdot}{\log\left(\vol\left(B(p,\cdot)\right)\right)}\not\in L^1(1,\infty)
\end{equation}
then $(M,g)$ is stochastically complete.
\end{thm}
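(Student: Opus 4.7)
The plan is to prove the contrapositive by combining the standard Khas'minskii-type reduction of stochastic completeness to a Liouville property, with a weighted $L^2$ iteration along nested balls that translates $L^\infty$-boundedness of an auxiliary elliptic solution into the volume-growth criterion \eqref{Grig'sFormula}. The starting point, which I would first establish (or cite), is the equivalence: $(M,g)$ fails to be stochastically complete if and only if there exists a non-trivial bounded non-negative function $u$ satisfying $\Delta_g u + u = 0$ in the positive-Laplacian convention of \eqref{LaplacianConvention}. The candidate $u$ is produced from $v(t,p):=1-\int_M H(t,p,\tilde p)\dvol_g(\tilde p)$, which is a bounded heat solution with vanishing initial datum, by taking its Laplace transform $u(p):=\int_0^\infty e^{-t}v(t,p)\,dt$; by the semigroup property, $v\not\equiv 0$ precisely when $(M,g)$ is stochastically incomplete. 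It thus suffices to show that \eqref{Grig'sFormula} forces every such $u$ to vanish.

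For this, fix the reference point $p$ and, for each $R>0$, use completeness of $(M,g)$ to choose a Lipschitz cutoff $\eta_R$ supported in $B(p,2R)$, equal to $1$ on $B(p,R)$, with $|\nabla \eta_R|\le 2/R$. Testing $\Delta_g u=-u$ against $\eta_R^2 u$ and integrating by parts yields, after absorbing the cross term via $2ab\le\tfrac12 a^2+2b^2$, the Caccioppoli-type estimate
\begin{equation*}
\tfrac12\int_M \eta_R^2\,|\nabla u|^2\,\dvol_g+\int_M \eta_R^2\,u^2\,\dvol_g \;\le\; 2\int_M |\nabla \eta_R|^2\,u^2\,\dvol_g.
\end{equation*}
Following Grigor'yan, the main step is to iterate this on a carefully chosen sequence of radii $R_0<R_1<\cdots<R_N$. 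Applying the inequality on each annulus $B(p,R_{k+1})\setminus B(p,R_k)$ produces a recursion that, after telescoping, gives a bound of the schematic form
\begin{equation*}
\int_{B(p,R_0)} u^2\,\dvol_g \;\le\; \Bigl(\prod_{k=0}^{N-1}\tfrac{C}{(R_{k+1}-R_k)^2}\Bigr)\,\|u\|_\infty^2\,\vol\bigl(B(p,R_N)\bigr).
\end{equation*}
Taking logarithms, the right-hand side becomes $\log\vol(B(p,R_N))-2\sum_k\log(R_{k+1}-R_k)+O(N)$; optimizing the spacings subject to $\sum_k(R_{k+1}-R_k)=R_N$ via Jensen (or Cauchy--Schwarz) and sending $N\to\infty$ converts the estimate into a bound controlled by $\int_1^{R_N}\overline{f}(r)\,dr$. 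The hypothesis $\overline{f}\notin L^1(1,\infty)$ then drives the right-hand side to zero, yielding $u\equiv 0$ on $B(p,R_0)$ for every $R_0$, a contradiction.

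The main technical obstacle is the sharp matching of the iteration with the integrability condition in \eqref{Grig'sFormula}: a naive choice of equispaced $R_k$ only produces the exponential bound $\vol(B(p,R))\le e^{CR^2}$, whereas the finer criterion $r/\log\vol(B(p,r))\notin L^1$ requires choosing the $R_k$ implicitly via a prescribed growth of the volumes $V_k:=\vol(B(p,R_k))$ and invoking monotonicity of $R\mapsto \vol(B(p,R))$ (which uses completeness of $g$ to ensure relative compactness of geodesic balls). A secondary subtlety is making the Khas'minskii reduction rigorous without a priori smoothness of $u$: one works with the Friedrichs extension of $\Delta_g$ and approximates by Dirichlet heat kernels on an exhausting sequence of precompact subdomains, passing to the limit using standard heat-kernel comparison.
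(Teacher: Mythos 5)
First, note that the paper does not prove this statement at all: Theorem \ref{Gri} is quoted as Grigor'yan's criterion with references to \cite{grigor1986stochastically} and \cite[Theorem 2.11]{alias2016maximum}, so there is no in-paper argument to compare yours against. Your proposal must therefore stand on its own as a proof of Grigor'yan's theorem. The first half does: the reduction of stochastic incompleteness to the existence of a non-trivial bounded non-negative solution of $\Delta_g u + u = 0$ via the Laplace transform of $v(t,p)=1-\int_M H(t,p,\tilde p)\,\dvol_g(\tilde p)$ is the standard Khas'minskii--Grigor'yan reduction and is correct (with the sign convention \eqref{LaplacianConvention}, $\partial_t v=-\Delta_g v$ and $v(0,\cdot)=0$ give $\Delta_g u+u=0$).

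The second half has a genuine gap: the plain Caccioppoli iteration cannot reach the hypothesis \eqref{Grig'sFormula}, and the "optimization of spacings" step would fail if carried out. Testing $\Delta_g u+u=0$ against $\eta^2u$ gives, with $a_k:=\int_{B(p,R_k)}u^2\,\dvol_g$, a recursion of the form $a_k\le C(R_{k+1}-R_k)^{-2}(a_{k+1}-a_k)$, i.e.\ $a_N\ge a_0\prod_{k}\bigl(1+c(R_{k+1}-R_k)^2\bigr)$. To conclude $a_0=0$ you need $\sum_k\log\bigl(1+c(R_{k+1}-R_k)^2\bigr)-\log\vol B(p,R_N)\to\infty$. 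But under the constraint $\sum_k(R_{k+1}-R_k)=R_N-R_0$, concavity of $h\mapsto\log(1+ch^2)$ shows the sum is maximized at equal spacing and is at most $N\log\bigl(1+c(R_N/N)^2\bigr)=O(R_N)$ uniformly in $N$. So this scheme only rules out bounded solutions when $\log\vol B(p,R)=o(R)$, i.e.\ sub-exponential volume growth --- it does not even recover the classical $e^{CR^2}$ threshold (your claim that equispaced radii yield $e^{CR^2}$ is already the statement of a stronger, weighted estimate, not of this elliptic iteration), let alone the full criterion $\overline f\notin L^1(1,\infty)$, which admits $\vol B(p,R)\sim e^{R^2}$ and worse. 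The missing idea is Grigor'yan's integrated maximum principle: one must insert a Gaussian-type exponential weight $e^{\xi}$ (with $\xi$ built from the distance function and a time parameter, as in Karp--Li) into the energy identity for the heat equation, and choose the radii $R_k$ through the prescribed doubling of $\log\vol B(p,R_k)$; it is only at that stage that the quantity $\int r\,dr/\log\vol B(p,r)$ genuinely enters. Without that weighted estimate the proposed proof does not close.
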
 

\medskip

In oder to check that $\Phi$-manifolds are stochastically complete, we begin by pointing out that $\Phi$-manifolds can be expressed as the union of a compact region $K$ with an open subset $U$, with $U$ equipped with the Riemannian metric locally given by the expression \eqref{phimetric}.
This is obtained by considering $U \simeq (0,1)\times \partial \overline{M}$ and identifying  $K = \{p\in \overline{M}\,|\,x(p) \ge 1\}$.

\medskip
Let $(M,g_{\Phi})$ be a $\Phi$-manifold.
By performing the change of coordinates $r = x^{-1}$ on $M$, one can rewrite the expression for $g_{\Phi}$ as
\begin{equation}
g_{\Phi} = \di r^{2} + r^{2}\phi^{*}g_{Y} + g_{Z} + h.
\end{equation}
With such a change of coordinates we note that, since both $Y$ and $Z$ are compact, the distance between two points towards the boundary ($\partial \overline{M} =\{r=\infty\}$) is proportional to $r$.
This can be checked by noticing that the distance from the boundary is given by the term $\di r^{2}$; therefore the distance in this direction is proportional to the Euclidean distance given in polar coordinates.

\medskip
In view of Theorem \ref{Gri}, let $p \in K$ (therefore away from the singular region) be fixed and consider $B(p,R)$ to be the open disc centred at $p$ of radius $R$.  
For any positive number $S$, let us consider the truncated compact subset $\overline{M}_{S} = \{q\in \overline{M}\,|\, r(q)\le S\}$.
As mentioned above, the distance on $(\overline{M},g_{\Phi})$ is proportional to $r$. 
Thus, there exists some fixed number $0<L<1/2$ such that, for $R>0$ large enough, the inclusion $B(p,R) \subset \overline{M}_{R/L}$ holds.
Therefore we conclude
\begin{equation*}
    \dfrac{R}{\log \vol B(p,R)} \sim \dfrac{R}{\log \vol \overline{M}_{R}}\;\; \mbox{as} \;\; R\rightarrow \infty,
\end{equation*}
meaning that the two functions agree up to bounded functions.
In particular, the latter is integrable if and only if the former is.
The expression for the $\Phi$-metric implies $\dvol_{\Phi}(p) = h_{0}r(p)^{b}\di r \di y \di z$, with $h_{0}$ being a bounded smooth function.
Hence, as $R$ goes to $\infty$, $\vol \overline{M}_{R} \sim R^{b+1} \le e^{CR^{2}}$, for some positive constant $C$.
That is
\begin{equation}
    \dfrac{\cdot}{\log \vol \overline{M}_{\cdot}} \notin L^{1}(1,\infty),
\end{equation}
implying, in particular, $(M,g_{\Phi})$ to be stochastically complete.

\section{Review of the heat space $M^{2}_{h}$}\label{HeatSpaceSec}

Section \ref{ShortTimeExistenceSec} will be devoted to proving the main results of this work; that is the existence of solutions for short time to heat-type Cauchy problems.
This will be achieved employing mapping properties of the heat-kernel operator $\mathbf{H}$. 

\medskip
Recall that the heat-kernel operator $\mathbf{H}$ is nothing but a convolution with the fundamental solution $H$ of the heat-equation; therefore turning $\mathbf{H}$ into a parametrix for the heat-operator $P=\partial_t+\Delta_{\Phi}$, with $\Delta_{\Phi}$ denoting the Laplace-Beltrami operator on a $\Phi$-manifold $(M,g_{\Phi})$.
As usual, we refer to such a function $H$ as the heat-kernel.
We want to point out that the heat-kernel can be seen as an element of a set of ``nice'' functions on $\overline{M}^{2}\times [0,\infty)_{t}$ (the polyhomogeneous functions).
For convenience to the reader, we will recall the definition of polyhomogeneous functions for a generic manifold with corners $M_C$.
We begin by defining an index family.

\medskip
A set $A \subset \mathbb{C}\times \mathbb{N}_{0}$ is called an index set if it satisfies the following conditions:
\begin{enumerate}
    \item $A$ is discrete and bounded from below; 
    \item $A_{j} := \{(\varsigma,p)\,| \, \mbox{Re}(\varsigma) < j\}$ is finite, for all $j$;
    \item If $(\varsigma,p) \in A$, then $(\varsigma + n,p) \in A$ for every $n \in \mathbb{N}$.
\end{enumerate}
A family $\mathcal{A} = (A_{1},...,A_{k})$ of index sets is called an index family.

\medskip
Next, let $M_C$ be a manifold with corners. 
Denote by $\{M_{C,1},...,M_{C,k}\}$ its family of boundary hypersurface and with $\left\{\rho_{M_{C,1}},...,\rho_{M_{C,k}}\right\}$ their respective boundary defining functions.
A function $H:M_C\rightarrow \mathbb{R}$ is polyhomogeneous with index family $\mathcal{E}$ if, near each boundary hypersurface $M_{C,j}$, $H$ admits the following asymptotic expansion:
\begin{equation} \label{phg}
    H\sim \displaystyle\sum_{(\varsigma,n)\in A_{j}}a_{j,\varsigma,n}\rho^{\varsigma}_{M_{C,j}}\left(\log\rho_{M_{C,j}}\right)^{n}, \; \mbox{as} \; \rho_{M_{C,j}}\rightarrow 0,
\end{equation}
with $a_{j,\varsigma,n}$ polyhomogeneous on $M_{C,j}$ with index family $(A_{1},...,\hat{A_{j}},,...,A_{k})$ near the intersections $M_{C,j}\cap M_{C,l}$ for any $l\neq j$.
In the above the $\hat{A_j}$ means that $A_j$ is not part of the index family.

\begin{rmk}
We would like to point out that the notation used above for boundary defining functions, i.e. $\rho_{\star}$ is the b.d.f. of $M_{\star}$, will be used across the entire work.
\end{rmk}

\medskip

In the setting of manifolds with fibered boundary, one has that the heat-kernel $H$ is a polyhomogeneous functions once considered over the heat space $M^2_h$.
Such a space $M^2_h$ is obtained from $\overline{M}^2\times[0,\infty)$ by replacing the regions where the heat-kernel $H$ is singular by their blow-ups. 

\medskip
The construction of the heat space is given by $3$ iterated blow-ups of $\overline{M}^{2}\times [0,\infty)_{t}$.  Such blow-ups are necessary to understand the asymptotic behavior of the heat kernel near its singular points. This can be done by replacing the singular regions by new boundary hypersurfaces. We refer to \cite{VerTal} for a more detailed discussion on both the construction of the heat space and the properties of the heat kernel given below.

\medskip
Before proceeding with the construction, we want to briefly recall the notion of p-submanifolds.
\begin{defn}
Let $\overline{M}$ denote an $m$-dimensional manifolds with boundary and consider $N\subset \overline{M}$ to be an $n$-dimensional submanifold. 
We say that $N$ is a p-submanifold if it can be locally expressed as $(x^{n+1}=0,\dots,x^{m}=0)$, where $(x^i)_i$ are local coordinates on $M$.
\end{defn}

\subsection{The first blow-up}

Consider first the submanifold $S_{1} = (\partial \overline{M})^{2}\times [0,\infty)_{t}$ of $\overline{M}^{2}\times [0,\infty)_{t}$.  
Notice that, since $\partial \overline{M}$ is a p-submanifold of $\overline{M}$, 
$S_1$ is a p-submanifold of $\overline{M}^2\times[0,\infty)$. 
By blowing up $S_1$ in $\overline{M}^2\times [0,\infty)_t$, we get the pair
$$M^{2}_{h,1}:= [\overline{M}^{2}\times [0,\infty)_{t};S_{1}],\;\;
\beta_{1}: M^{2}_{h,1} \rightarrow \overline{M}^{2}\times [0,\infty)_{t}.$$
The object $M^{2}_{h,1}$ is a "new" manifold obtained by cutting out the codimension $2$ submanifold of $\overline{M}^2\times [0,\infty)_t$ (displayed below as an edge) and gluing its spherical normal bundle (under appropriate identification) represented by a new boundary hypersurface (which is the conormal bundle of $S_{1}$ in $M^{2}\times [0,\infty)_{t}$).  
The new manifold $M^2_{h,1}$ comes equipped with a blowdown map $\beta_1: M^2_{h,1}\rightarrow\overline{M}^2\times[0,\infty)$.
The blowdown map is completely described by appropriate projective coordinates.
Before presenting the projective coordinates, we furnish the reader with a picture describing the blow up process from $\overline{M}^2\times[0,\infty)$ (right) to $M^2_{h,1}$ (left).
Some extra notation will be added into the picture, namely some reference to some specific faces.
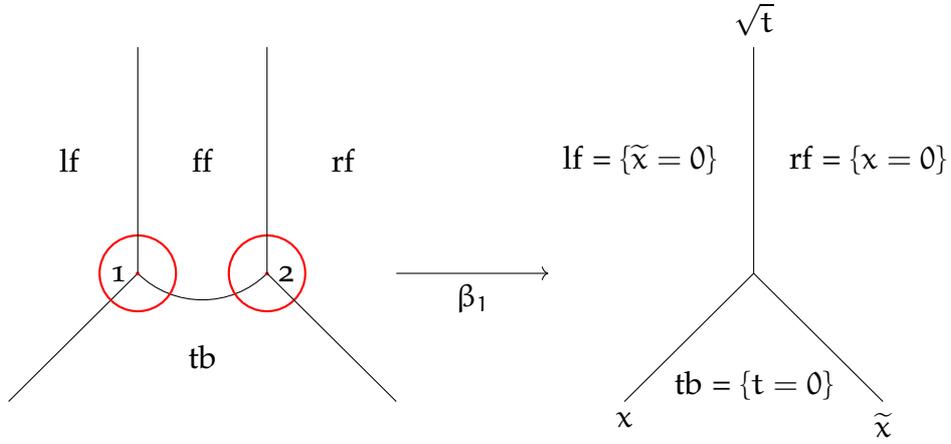
\begin{figure}[!htbp]
\centering
\begin{tikzpicture}
\draw (-8.8,-1.7)--(-7.1,0) node[left]{1};
\draw (-7.1,0) node[red,thick,circle,inner sep=9pt,draw]{.} to[out=315,in=225] (-5.4,0) node[red,thick,circle,inner sep=9pt,draw]{.};
\draw (-7.1,0) -- (-7.1,3);
\draw (-5.4,0) node[right]{2} to (-5.4,3);
\draw (-5.4,0) to (-3.7,-1.7);
\draw (-8,1.5) node{lf};
\draw (-6.25,1.5) node{ff};
\draw (-4.4,1.5) node{rf};
\draw (-6.25,-1.1) node{tb};
\draw[->] (-3.7,0)--(-1.7,0);
\draw (-2.7,0) node[below]{$\beta_{1}$};
\draw (1,0) to (-0.7,-1.7) node[left,below]{$x$};
\draw (1,0) to (1,3) node[above]{$\sqrt{t}$};
\draw (1,0) to (2.7,-1.7)  node[right,below]{$\widetilde{x}$};
\draw (-0.5,1.5) node{lf = $\{\widetilde{x}=0\}$};
\draw (2.5,1.5) node{rf = $\{x=0\}$};
\draw (1,-1.5) node{tb = $\{t=0\}$};
\end{tikzpicture}
\caption{First blow-up $M^{2}_{h,1}$} \label{firstblowup}
\end{figure}

Following the steps described in \cite{grieser2001basics}, one can describe the projective coordinates for $M^{2}_{h,1}$ by considering two regimes:

$\bullet$ \textbf{Regime near the intersection of lf, ff and tb:}  This regime is represented in the Figure \ref{firstblowup} by "regime 1". This regime is identified with the region where $\widetilde{x}\ll x$.
This implies, in particular, that the function $\widetilde{s} = \widetilde{x}^{-1}x$ is bounded.  Therefore, by writing $\sqrt{t} =: \tau$, the projective coordinates for the lower-left corner are
\begin{equation}\label{lowerleftcorner}
    \left(x,y,z,\dfrac{\widetilde{x}}{x},\widetilde{y},\widetilde{z},\sqrt{t}\right) = (x,y,z,\widetilde{s},\widetilde{y},\widetilde{z},\tau).
\end{equation}
Hence, on Regime 1 one has $\rho_{\ff} = x$, $\rho_{{\lf}} = \widetilde{s}$ and $\rho_{{\tb}} = \tau$, where we write $\rho_{\star}$ for a defining function of a boundary hypersurface $\star$.
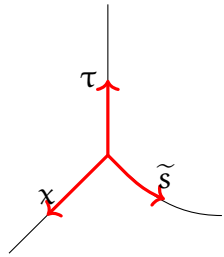
\begin{figure}[H]
    \centering
   \begin{tikzpicture}
\draw (0,0) to (-1.3,-1.3);
\draw (0,0) to (0,2);
\draw (0,0) to[out=315,in=180] (1.5,-0.8);
\draw[->,red,very thick] (0,0) to (-0.8,-0.8);
\draw[->,red,very thick] (0,0) to[out=315,in=157.5] (0.75,-0.585);
\draw[->,red,very thick] (0,0) to (0,1);
\draw (-0.8,-0.8) node[above]{$x$};
\draw (0.75,-0.585) node[above]{$\widetilde{s}$};
\draw (0,1) node[left]{$\tau$};
\end{tikzpicture}
    \caption{Regime near the intersection of lf, ff and tb}
    \label{lowerleftregime}
\end{figure}

$\bullet$ \textbf{Regime near the intersection of rf, ff and tb:}  This regime is represented in Figure \ref{firstblowup} by "regime 2", being identified with the case $x\ll \widetilde{x}$.  If $x \ll \widetilde{x}$ then $s = x^{-1}\widetilde{x}$ is a bounded function.  Hence, defining $\tau$ as above, the projective coordinates for the right-hand corner is
\begin{equation}\label{lowerrightcorner}
    \left(\sqrt{t},\dfrac{x}{\widetilde{x}},y,z,\widetilde{x},\widetilde{y},\widetilde{z}\right) = (\tau,s,y,z,\widetilde{x},\widetilde{y},\widetilde{z}).
\end{equation}
Similarly, on Regime 2 one has $\rho_{\ff} = \widetilde{x}$, $\rho_{\rf} = s$ and $\rho_{{\tb}} = \tau$.
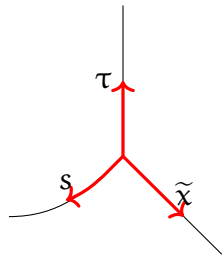
\begin{figure}[!htbp]
    \centering
    \begin{tikzpicture}
\draw (0,0) to (1.3,-1.3);
\draw (0,0) to (0,2);
\draw (0,0) to[out=225,in=0] (-1.5,-0.8);
\draw[->,red,very thick] (0,0) to (0.8,-0.8);
\draw[->,red,very thick] (0,0) to[out=225,in=22.5] (-0.75,-0.585);
\draw[->,red,very thick] (0,0) to (0,1);
\draw (0.8,-0.8) node[above]{$\widetilde{x}$};
\draw (-0.75,-0.585) node[above]{$s$};
\draw (0,1) node[left]{$\tau$};
\end{tikzpicture}
    \caption{Regime near the intersection of rf, ff and tb}
    \label{lowerrightregime}
\end{figure}

\begin{rmk}
The projective coordinates defined above for Regimes 1 and 2 are valid in "larger" regions.  
In fact, one can define both $s$ and $\widetilde{s}$ as long as one stays away from $\{\widetilde{x} = 0\}$ and $\{x = 0\}$ respectively.
This perspective will be useful for computing the parabolic Schauder estimates throughout \S \ref{HDSSection} to \S \ref{SNSection}.
\end{rmk}
We can finally give a precise expression for the blowdown map $\beta_1$.
We will focus only on regime $1$, since the local description of $\beta_{1}$ for regime $2$ follows similarly.
When restricted to the lower-left corner, the blowdown map takes the expression
\[(\beta_{1})\big|_{1}(\tau,x,y,z,\widetilde{s},\widetilde{y},\widetilde{z}) = (\tau,x,y,z,x\widetilde{s},\widetilde{y},\widetilde{z}).\]

\subsection{The second blow-up}
The second blow-up consists in blowing up the temporal fiber diagonal, meaning that we want to blow-up the submanifold $S_{2}$ of $M^{2}_{h,1}$ given by
\[S_2:=\left\{\dfrac{\widetilde{x}}{x} - 1 = 0 \hspace{2mm} \mbox{and} \hspace{2mm} y = \widetilde{y}\right\}.\]
To give a visual idea, the submanifold $S_2$ can be seen as a line in the middle of $\ff$ (in Figure \ref{firstblowup}) given by its intersection with the subspace $\{x = \widetilde{x}\}$.
As for the first blow-up, the "new" manifold can be pictured by replacing $S_{2}$ by its spherical inward pointing normal bundle (see Figure \ref{secondblowup}).  
The "new" manifold is defined by the pair 
$$M^{2}_{h,2} := [M^{2}_{h,1};S_{2}],\;\;\beta_{2}:M^{2}_{h,1}\rightarrow M^{2}_{h,1}$$
and it has a new boundary hypersurface given by fd = $\{\widetilde{s}-1=0 \hspace{2mm} \mbox{and} \hspace{2mm} y=\widetilde{y}\}$.
One can then consider the iterated blowdown map as the composition $\beta_{1}\circ \beta_{2}:M^{2}_{h,2}\rightarrow \overline{M}^{2}\times [0,\infty)_{\infty}$.
\begin{figure}[H]
    \centering
    \begin{tikzpicture}
\draw (0,0) node[left]{1} to (0,3);
\draw (0,0) to (-1.8,-1.8);
\draw (0,0) node[circle,inner sep=9pt,draw]{.} to[out=315,in=180] (1.2,-0.4);
\draw (1.2,-0.4) node[right]{3} to (1.2,3);
\draw (1.2,-0.4) node[red,thick,circle,inner sep=9pt,draw]{.} to[out=270,in=270] (3.1,-0.4);
\draw (3.1,-0.4) node[left]{4} to (3.1,3);
\draw (3.1,-0.4) node[red,thick,circle,inner sep=9pt,draw]{.} to[out=0,in=225] (4.3,0);
\draw (4.3,0) node[circle,inner sep=9pt,draw]{.} to (4.3,3);
\draw (4.3,0) node[right]{2} to (6.1,-1.8);
\draw (-1,1.5) node{lf};
\draw (0.6,1.5) node{ff};
\draw (2.15,1.5) node{fd};
\draw (3.7,1.5) node{ff};
\draw (5.3,1.5) node{rf};
\draw (2.15,-1.5) node{tb};
\end{tikzpicture}
    \caption{Second blow-up $M^{2}_{h,2}$}
    \label{secondblowup}
\end{figure}
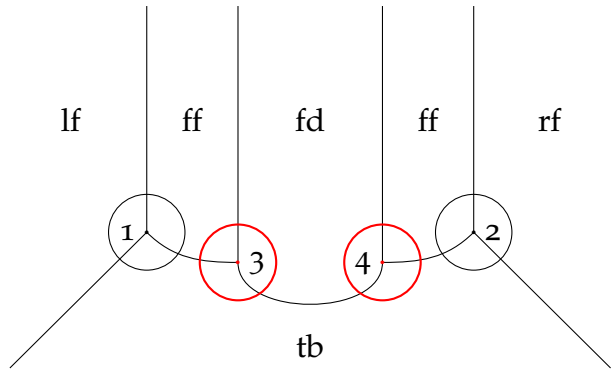
Following again the steps described in \cite{grieser2001basics}, it is possible to define the projective coordinates on fd by taking
\begin{equation}\label{coordinatesnearfd}
    \left(\tau,x,y,z,\dfrac{\widetilde{s}-1}{x},\dfrac{\widetilde{y}-y}{x},\widetilde{z}-z\right) =: \left(\tau,x,y,z,\mathcal{S}',\mathcal{U}',\mathcal{Z}'\right)
\end{equation}
away from $x = 0$ (which corresponds to "regime 3" in Figure \ref{secondblowup}).  Similarly, one can consider the projective coordinates on $\ff$ away from $\widetilde{x}=0$ (corresponding to "regime 4" in Figure \ref{secondblowup}) as
\[\left(\tau,\widetilde{x},\widetilde{y},\widetilde{z},\dfrac{s-1}{\widetilde{x}},\dfrac{y-\widetilde{y}}{\widetilde{x}},z-\widetilde{z}\right) =: \left(\tau,\widetilde{x},\widetilde{y},\widetilde{z},\widetilde{\mathcal{S}}', \widetilde{\mathcal{U}}',\widetilde{\mathcal{Z}}'\right).\]
\begin{rmk} \label{projr3}
Despite the projective coordinates given above for Regimes 3 and 4, one can actually use just one of the coordinates above to work on both Regimes, since one can understand that approaching $\ff$ from $\fd$ means that $\|(\mathcal{S}',\mathcal{U}',\mathcal{Z}')\|\rightarrow \infty$ (and similarly for $\|(\widetilde{\mathcal{S}}',\widetilde{\mathcal{U}}',\widetilde{\mathcal{Z}}')\|$).  Hence, one can say that on both Regimes 3 and 4, $\rho_{{\tb}} = \tau$, $\rho_{{\fd}} = x$ and one approaches $\ff$ if $\|(\mathcal{S}',\mathcal{U}',\mathcal{Z}')\|\rightarrow \infty$.
\end{rmk}
\begin{figure}[H]
    \centering
    \begin{tikzpicture}
\draw (0,0) to (0,3);
\draw (0,0) to[out=315,in=180] (1.2,-0.4);
\draw (1.2,-0.4) to (1.2,3);
\draw (1.2,-0.4) to[out=270,in=270] (3.1,-0.4);
\draw (3.1,-0.4) to (3.1,3);
\draw (3.1,-0.4) to[out=0,in=225] (4.3,0);
\draw (4.3,0) to (4.3,3);
\draw (0.6,1.5) node{ff};
\draw (2.15,1.5) node{fd};
\draw (3.7,1.5) node{ff};
\draw (0.7,-1.5) node{tb};
\draw[<->,red,very thick] (1.2,-0.4) to[out=270,in=270] (3.1,-0.4);
\draw[->,red,very thick] (2.15,-0.95) to (2.15,0.7);
\draw[->,red,very thick] (2.15,-0.95) to (2,-1.9);
\draw (2.4,-0.6) node[right]{$\mathcal{S}'$};
\draw (2.15,0) node[left]{$\tau$};
\draw (2,-1.6) node[right]{$x$};
\end{tikzpicture}
    \caption{Projective coordinates for the second blow-up}
    \label{projectivesecondblowup}
\end{figure}
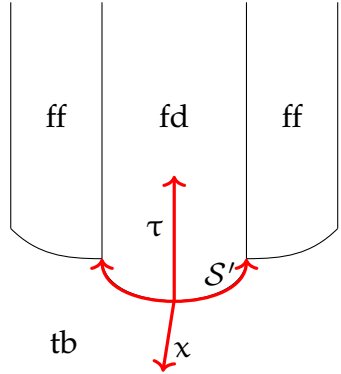
    
\subsection{The third blow-up}
We are now ready to present our third and last blow-up.
This blow-up arises from the classical singularity of the heat kernel on the spatial diagonal at time $t=0$. 
Therefore, the heat space $M^2_h$ is built by replacing $\mbox{diag}(M)\times \{t=0\}$ by its spherical normal bundle on $M^{2}_{h,2}$ (see Figure \ref{thirdblowup}).  
More precisely
$$\begin{array}{l}
M^{2}_{h} := \left[M^{2}_{h,2};(\beta_{1}\circ \beta_{2})^{-1}(\mbox{diag}(M)\times \{t=0\})\right], 
\; \beta: M^{2}_{h}\rightarrow \overline{M}^{2}\times [0,\infty)_{t}
\end{array}$$
with $\beta$ being the iterated blowdown map.  
Note that the heat space has one further boundary hypersurface td.
In particular this implies that $\mathcal{M}_{1}(M^{2}_{h}) = \{{\lf},\rf,{\tb},{\ff},{\fd},{\td}\}$ is the family of boundary hypersurfaces for $M^2_h$.
\begin{figure}[H]
    \centering
    \begin{tikzpicture}
\draw (0,0) node[left]{1} to (0,3);
\draw (0,0) to (-1.8,-1.8);
\draw (0,0) node[circle,inner sep=7pt,draw]{.} to[out=315,in=180] (1.2,-0.4);
\draw (1.2,-0.4) node[right]{3} to (1.2,3);
\draw (1.2,-0.4) node[circle,inner sep=7pt,draw]{.} to[out=270,in=180] (1.6,-0.8);
\draw (1.6,-0.8) to (1.6,-1.8);
\draw (1.6,-0.8) to[out=90,in=90] (2.7,-0.8);
\draw (2.7,-0.8) to (2.7,-1.8);
\draw (2.7,-0.8) to[out=0,in=270] (3.1,-0.4);
\draw (3.1,-0.4) node[left]{4} to (3.1,3);
\draw (3.1,-0.4) node[circle,inner sep=7pt,draw]{.} to[out=0,in=225] (4.3,0);
\draw (4.3,0) node[circle,inner sep=7pt,draw]{.} to (4.3,3);
\draw (4.3,0) node[right]{2} to (6.1,-1.8);
\draw (2.15,-0.495) node[red,thick,circle,inner sep=9pt,draw]{.};
\draw (2.15,-0.495) node[above]{5};
\draw (-1,1.5) node{lf};
\draw (0.6,1.5) node{ff};
\draw (2.15,1.5) node{fd};
\draw (3.7,1.5) node{ff};
\draw (5.3,1.5) node{rf};
\draw (0.6,-1.5) node{tb};
\draw (2.15,-1.5) node{td};
\draw (3.7,-1.5) node{tb};
\end{tikzpicture}
    \caption{Third blow-up}
    \label{thirdblowup}
\end{figure}
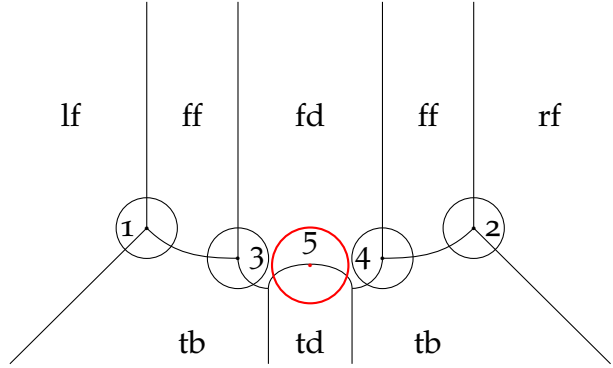
The projective coordinates near the intersection of fd and td (which is represented by "regime 5" in Figure \ref{thirdblowup}) are given by
\begin{equation}\label{coordiantesneartd}
    \left(\tau,x,y,z,\dfrac{\mathcal{S}'}{\tau},\dfrac{\mathcal{U}'}{\tau},\dfrac{\mathcal{Z}'}{\tau}\right) =: (\tau,x,y,z,\mathcal{S},\mathcal{U},\mathcal{Z}).
\end{equation}
Using the same notations as in the previous blow-ups, we have the boundary defining functions $\rho_{{\fd}} = x$, $\rho_{{\td}} = \tau$ and $\|(\mathcal{S},\mathcal{U},\mathcal{Z})\|\rightarrow \infty$ corresponds to $\tb$.
\begin{center}
\begin{tikzpicture}
\draw (1.2,-0.4) to (1.2,2);
\draw (1.2,-0.4) to[out=270,in=180] (1.6,-0.8);
\draw (1.6,-0.8) to (1.6,-2.5);
\draw (1.6,-0.8) to[out=90,in=90] (2.7,-0.8);
\draw (2.7,-0.8) to (2.7,-2.5);
\draw (2.7,-0.8) to[out=0,in=270] (3.1,-0.4);
\draw (3.1,-0.4) to (3.1,2);
\draw (2.15,1.5) node{fd};
\draw (2.15,-2.2) node{td};
\draw (3.7,-1.5) node{tb};
\draw[<->,red,very thick] (1.6,-0.8) to[out=90,in=90] (2.7,-0.8);
\draw[<->,red,very thick] (2.15,-1.5) to (2.15,1);
\draw (2.15,-1.2) node[right]{$\tau$};
\draw (2.15,0.7) node[right]{$x$};
\draw (2.7,-0.6) node[above]{$\mathcal{S}$};
\end{tikzpicture}
\end{center}
\begin{rmk}
In the interior of $\td$, away from $\fd$, we can also use projective coordinates $(\tau,(\theta-\widetilde{\theta})/\tau,\widetilde{\theta})$, where $\theta,\widetilde{\theta}$ are two copies of any local coordinates on $M$.
\end{rmk}

\section{Asymptotic expansion of the heat kernel on $M^2_h$}\label{LiftsSection}

In the previous section, we gave a quick overview of the heat space $M^{2}_{h}$ and presented the main regimes we will be interested in.
Recall that $(M,g_{\Phi})$ will always denote a $\Phi$-manifold and $H$ the heat kernel of the Laplace-Beltrami operator $\Delta_{\Phi}$ associated to the $\Phi$-metric $g_{\Phi}$.

\medskip
The aim of this section is to recall a result, obtained by \cite{VerTal}, describing the asymptotic behavior of $\beta^{*}H$ on $M^{2}_{h}$ when approaching each of its boundary hypersurfaces in $\mathcal{M}_1(M^2_h)$ described in \S \ref{HeatSpaceSec}.
\medskip


\begin{thm}\cite[Theorem 7.2]{VerTal}\label{MohammadThm}
Let $(M,g_{\Phi})$ be an $m$-dimensional complete manifold with fibered boundary endowed with a $\Phi$-metric. Denote by $H$ the heat kernel associated to the unique self-adjoint extension of the corresponding Laplace-Beltrami operator. The lift $\beta^*H$ is a polyhomogeneous function on $M^2_h$ with asymptotic behavior described by 
\begin{equation}
\beta^*H\sim \rho_{\lf}^\infty \rho_{\ff}^\infty\rho_{\rf}^\infty\rho_{\tb}^\infty\rho_{\fd}^0\rho_{\td}^{-m}G_{0}
\end{equation}
with $G_{0}$ being a bounded function.
In particular, the above means that $\beta^{*}H$ is of leading order $-m$ on $\td$, smooth and bounded on $\fd$ and vanishes to infinite order on $\lf$, $\ff$, $\rf$ and $\td$.  
\end{thm}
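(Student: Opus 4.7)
The plan is to construct $\beta^*H$ directly as a polyhomogeneous conormal distribution on $M^2_h$ via a geometric parametrix method (in the spirit of Melrose's $b$-calculus and its $\Phi$-variant), and then invoke uniqueness — available because $(M,g_\Phi)$ is stochastically complete by Section \ref{maxsection} and $\Delta_\Phi$ admits a unique self-adjoint extension — to identify the construction with the true lifted heat kernel.

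First I would determine the normal (model) operator of the parabolic operator $P=\partial_t+\Delta_\Phi$ at each hypersurface of $\mathcal{M}_1(M^2_h)$. The whole point of the three blow-ups of Section \ref{HeatSpaceSec} is to make these normal operators tractable: at $\td$ it is the flat Euclidean heat operator on $T_pM\simeq\mathbb{R}^m$ expressed in the projective coordinates $(\mathcal{S},\mathcal{U},\mathcal{Z})$ of \eqref{coordiantesneartd}; at $\fd$ it is a translation-invariant heat operator in $(\mathcal{S}',\mathcal{U}',\mathcal{Z}')$ obtained by freezing coefficients on the fiber diagonal, with the fiber $Z$ appearing as a compact transverse factor; and at $\ff$ it is the heat operator on the model cone $\mathbb{R}^+_r\times Y\times Z$ associated with the exact $\Phi$-metric $\widehat{g}$, parametrized by $\widetilde{y}\in Y$.

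Next I would build an initial parametrix $H_0$ by prescribing its leading behavior at $\td$ as the Euclidean heat kernel: in the coordinates \eqref{coordiantesneartd} one has $H_0\sim \tau^{-m}G(\mathcal{S},\mathcal{U},\mathcal{Z})\chi$, with $G$ a Gaussian profile and $\chi$ a cutoff localizing near $\td$. The Gaussian decay of $G$ as $\|(\mathcal{S},\mathcal{U},\mathcal{Z})\|\to\infty$ — which is precisely the direction toward $\tb$ — together with the cutoff being supported away from $\lf,\rf$ and from the part of $\ff$ away from $\td$, forces $H_0$ to vanish to infinite order at $\lf,\rf,\ff,\tb$ while contributing the factor $\rho_{\td}^{-m}$ and a smooth bounded trace at $\fd$. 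I would then compute $E_0:=PH_0$, verify that it vanishes one order higher at $\td$, and correct iteratively: at each step the normal operator at $\td$ is inverted (a standard Euclidean inversion that preserves Gaussian decay), and, when necessary, the normal operator at $\ff$ is inverted using the model cone heat kernel. An asymptotic Borel summation produces a parametrix $H_\infty$ with error in $\rho_{\td}^\infty$ times something vanishing at the lateral faces, and this residual is removed by a Volterra iteration on $M^2_h$, whose composition rules preserve the claimed index family.

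The main obstacle is the front face $\ff$: unlike in the $b$-calculus, the fibration $\phi:\partial\overline{M}\to Y$ forces the $\ff$-normal operator to couple the cone direction $x$, the base $Y$, and the compact fiber $Z$, so one must separately establish existence, uniqueness, and sharp Gaussian off-diagonal estimates for this model heat kernel, adapted to the $\Phi$-distance \eqref{dist-func}. These estimates are what ultimately deliver the infinite vanishing at $\lf,\rf,\ff,\tb$. Stochastic completeness then plays a double role: it selects the correct self-adjoint realization of $\Delta_\Phi$ so that $H$ is unambiguous, and it rules out mass escape through the singular region, which is the probabilistic shadow of the infinite vanishing at $\ff$, thereby identifying the parametrix $H_\infty$ with $\beta^*H$ and yielding the stated polyhomogeneous expansion with leading behavior $\rho_{\fd}^0\rho_{\td}^{-m}G_0$.
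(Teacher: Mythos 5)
This statement is not proved in the paper at all: it is imported as \cite[Theorem 7.2]{VerTal}, and \S\ref{LiftsSection} merely records it and reads off its consequences in the various projective coordinate regimes. There is therefore no internal proof to compare your argument against; the relevant comparison is with the construction in the cited reference. Your outline is the standard Melrose-type heat-calculus strategy (normal operators at each face of $M^2_h$, an initial Gaussian parametrix at $\td$, iterative correction, Borel summation, Volterra series), which is indeed the kind of argument that produces the stated index sets, so the overall architecture is reasonable.

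Two caveats. First, the genuinely hard analytic content --- existence, uniqueness and sharp off-diagonal decay for the model heat kernel at $\ff$, where the cone over $Y$ couples to the compact fibre $Z$, together with a composition calculus on a triple heat space so that the Volterra iteration preserves polyhomogeneity and the index family at $\lf$, $\rf$, $\ff$, $\tb$ --- is exactly what you defer; as written the proposal is a plan rather than a proof, and in \cite{VerTal} this is where the bulk of the work lies (resting on the resolvent analysis of the companion paper). Second, you misattribute a role to stochastic completeness: it does not ``select the correct self-adjoint realization''. Essential self-adjointness of $\Delta_\Phi$ follows from geodesic completeness of $g_\Phi$, and the identification of the corrected parametrix with $H$ is achieved by a Duhamel/uniqueness argument for the heat equation, not by conservation of mass. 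In the present paper stochastic completeness is established in \S\ref{maxsection} and is used only in \S\ref{HDSSection}--\S\ref{SNSection} to evaluate integrals of the form $\int_M G(s-\widetilde{s},p,\widetilde{p})\,u(p,\widetilde{s})\,\dvol_\Phi(\widetilde{p})$; it plays no role in Theorem \ref{MohammadThm}, and conversely the infinite-order vanishing at $\ff$ is not a consequence of $\int_M H\,\dvol_\Phi=1$.
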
 
We can now describe the asymptotic behaviors of objects, which will be of particular interest in the next sections, neat the regimes introduced in \S \ref{HeatSpaceSec}.

\begin{rmk}
In the following formulas we will be describing only the worst case scenario, i.e. the most singular behavior. Bounds for better behaved terms in the asymptotic expansion follow from the most singular one and the error estimate from the definition of polyhomogeneous conormal functions. 
\end{rmk}

\textbullet \textbf{Lifts in the intersection of lf, ff and tb:} In this regime, the asymptotic behaviors of both $\beta^*H$ and $\beta^{*}\dvol_{\Phi}\di \widetilde{t}$ are appropriately described by the projective coordinates
\begin{align*}
x,\;\;y,\;\;z,\;\;\widetilde{s}=\frac{\widetilde{x}}{x},\;\;\widetilde{y},\;\;\widetilde{z},\;\;\tau=\sqrt{t}.
\end{align*}
Recall that, with respect to these coordinates, $\rho_{\lf}=\widetilde{s}$, $\rho_{\ff}=x$ and $\rho_{\tb}=\tau$. From Theorem \ref{MohammadThm} and by computing directly the pull-back of the volume form, one has
\begin{equation}\label{LiftNear_lf}
\begin{split}
&\beta^*(XH)\sim\tau^{-1}(x\widetilde{s}\tau)^\infty G_{0}=(x\widetilde{s}\tau)^\infty G_{0}\;\;\mbox{ for }X\in\{\Id, \mathcal{V}_\phi,\mathcal{V}^2_\phi,\partial_t\}\\
&\beta^*(\dvol_\Phi \di\widetilde{t})\sim2(\widetilde{s}x)^{-2-b}\tau x h\di \widetilde{s}\di\widetilde{y}\di\widetilde{z}\di\tau.
\end{split}
\end{equation}

\textbullet\textbf{Lifts in the intersection of rf, ff and tb:} The asymptotic behaviors of both $\beta^*H$ and $\beta^{*}\dvol_{\Phi}\di \widetilde{t}$ are suitably described by the projective coordinates
$$s=\frac{x}{\widetilde{x}},\;\;y,\;\;z,\;\;\widetilde{x},\;\;\widetilde{z},\;\;\tau=\sqrt{t}.$$
The boundary defining function with respect to these coordinates are $\rho_{\rf}=s$, $\rho_{\ff}=\widetilde{x}$ and $\rho_{\tb}=\tau$. As above we can conclude that 
\begin{equation}\label{LiftNear_rf}
\begin{split}
&\beta^*(XH)\sim\tau^{-1}(\widetilde{x}s\tau)^\infty G_{0}=(\widetilde{x}s\tau)^\infty G_{0}\;\;\mbox{ for } X\in\{\Id,\mathcal{V}_\phi,\mathcal{V}^2_\phi\}\\
&\beta^*(\dvol_\Phi\di\widetilde{t})\sim2\widetilde{x}^{-2-b}\tau h\di\widetilde{x}\di\widetilde{y}\di\widetilde{z}\di\tau.
\end{split}
\end{equation}

\textbullet\textbf{Lift in the intersection of ff, fd and tb:} In this regime, the asymptotic behaviors of both $\beta^*H$ and $\beta^{*}\dvol_{\Phi}\di \widetilde{t}$ are fittingly described using projective coordinates
$$x,\;\;y,\;\;z,\;\;\mathcal{S}'=\frac{\widetilde{x}-x}{x^2},\;\;\mathcal{U}'=\frac{y-\widetilde{y}}{x},\;\;\mathcal{Z}'=z-\widetilde{z},\;\;\tau=\sqrt{t}.$$
In the next section we will encounter some extra quantity in this regime hence it is useful to collect it here. As in the previous cases, one has
\begin{equation}\label{SecondBlowUpProjCoord1}
\begin{split}
&\beta^*(XH)=\tau^{-1}(\tau)^\infty G_{0}\sim\tau^\infty G_{0}\;\; \mbox{ for }X\in\{\Id,\mathcal{V}_\phi,\mathcal{V}^2_\phi\}.\\
&\beta^*(\dvol_\Phi\di\widetilde{t})\sim2(1+\mathcal{S}'x)^{-2-b}\tau h\di\mathcal{S}'\di\mathcal{U}'\di\mathcal{Z}'\di\tau.\\
&\beta^*(\partial_i XH)\sim x^{-2}\tau^\infty G_{0}\;\;\mbox{ with }i=x,y,z.
\end{split}
\end{equation}
Note that, in the above, $G_{0}$ vanishes to infinite order as $\|(\mathcal{S}',\mathcal{U}',\mathcal{Z}')\|$ goes to $\infty$.

\textbullet\textbf{Lifts in the intersection of fd and td:}
The appropriate projective coordinate for a suitable description of the asymptotic behavior of both $\beta^*H$ and $\beta^{*}\dvol_{\Phi}\di \tilde{t}$ in this regime are
$$x,\;\;y,\;\;z,\;\;\mathcal{S}=\frac{\widetilde{x}-x}{\tau x^2},\;\;\mathcal{U}=\frac{y-\widetilde{y}}{\tau x},\;\;\mathcal{Z}=\frac{z-\widetilde{z}}{\tau},\;\;\tau=\sqrt{t}.$$
Similarly to the regime where $\ff$ intersects $\fd$ and $\tb$ one finds
\begin{equation}\label{ABHK3}
\begin{split}
&\beta^*(XH)\sim(\tau)^{-m-2} G_{0}\;\;\mbox{ for }X\in\{\Id,\mathcal{V}_\phi,\mathcal{V}^2_\phi\}.\\
&\beta^*(\dvol_\Phi\di\widetilde{t})\sim2(1+\mathcal{S}\tau x)^{-2-b}\tau^{m+1} h\di\mathcal{S}\di\mathcal{U}\di\mathcal{Z}\di\tau.\\
&\beta^*(\partial_i XH)\sim x^{-2}\tau^{-m-3} G_{0}\;\;\mbox{ with }i=x,y,z.
\end{split}
\end{equation}
with $G_{0}$ vanishing to infinite order as $\|(\mathcal{S},\mathcal{U},\mathcal{Z})\|$ goes to $\infty$.

\section{Mapping properties of the heat-kernel on $M^2_h$}\label{MappingPropertiesSection}

Our aim is to prove the existence of solution of heat-type equations on manifolds with fibered boundary equipped with a $\Phi$-metric.
That is, we focus on the analysis of a Cauchy problem of the form
\begin{equation}
(\partial_t+\Delta)u=\ell,\;\;
u|_{t=0}=u_0
\end{equation}
for suitable functions $\ell$ and $u_0$. The proof of the existence of solutions for the above problem relies on the analysis of the mapping properties of the heat operator acting via time convolution between H\"{o}lder spaces. We will provide a slightly more general mapping properties of the heat operator between weighted H\"{o}lder spaces.
\begin{thm}\label{MappingPropertiesTHM}
Let $M$ be an $m$-dimensional manifold with fibered boundary equipped with a $\Phi$-metric. 
The heat operator $H$, acting by convolution, 
\begin{equation}\label{ClaimThm5.1}
\mathbf{H}:x^\gamma C^{k,\alpha}_\Phi(M\times [0,T])\rightarrow x^\gamma C^{k+2,\alpha}_\Phi(M\times[0,T])
\end{equation}
is bounded.
\end{thm}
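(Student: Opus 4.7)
The plan is to prove this via parabolic Schauder estimates adapted to the heat space $M^2_h$, following the general philosophy of the $b$-case in \cite{NEW} and \cite{bahuaud2014yamabe}: reduce the claim to estimating $(\mathcal{V}^{l_1}_\Phi\circ \partial_t^{l_2})\mathbf{H}\ell$ with $l_1+2l_2\le k+2$ in the $\alpha$-Hölder seminorm, and handle every such estimate by lifting the convolution integral $\mathbf{H}\ell(p,t)=\int\int H(t-\tilde t,p,\tilde p)\,\ell(\tilde p,\tilde t)\,\dvol_\Phi(\tilde p)\,d\tilde t$ to $M^2_h$ and analysing it regime-by-regime using the polyhomogeneous description in Theorem \ref{MohammadThm} and the volume-form lifts \eqref{LiftNear_lf}--\eqref{ABHK3}.

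First I would reduce to the unweighted case $\gamma=0$. Conjugating $\mathbf{H}$ by $x^\gamma$ produces a new kernel $\widetilde{H}=x^\gamma H\,\widetilde{x}^{-\gamma}$, which lifts on $M^2_h$ to $\rho_{\ff}^{\gamma}\rho_{\rf}^\gamma\rho_{\lf}^{-\gamma}\rho_{\ff}^{-\gamma}\beta^*H$ up to bounded smooth factors; because of the infinite-order vanishing of $\beta^*H$ at $\lf,\rf,\ff$ (from Theorem \ref{MohammadThm}), these additional weight factors are harmless, so it suffices to bound $\mathbf{H}:C^{k,\alpha}_\Phi\to C^{k+2,\alpha}_\Phi$. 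Next, since a $\Phi$-differential operator of order $2$ (including $\partial_t$, counted as order $2$) is either an element of $\mathcal{V}^2_\Phi$ applied to $\mathbf{H}\ell$, or $\partial_t\mathbf{H}\ell=\ell-\Delta_\Phi \mathbf{H}\ell$ by the equation $\mathbf{H}$ solves, the entire statement reduces to controlling $\mathcal{V}_\Phi^j \mathbf{H}\ell$ for $j\le k+2$ in sup-norm and $\alpha$-Hölder seminorm. Commuting $\Phi$-vector fields past the convolution integral just replaces $H$ by $XH$ with $X\in \mathcal{V}^j_\Phi$, and the same asymptotic behavior in \eqref{LiftNear_lf}--\eqref{ABHK3} applies (with two derivatives bringing in the critical singularity on $\td$).

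The core estimates are then the ``double-difference'' bounds
\[
\frac{|(\mathcal{V}^{k+2}_\Phi\mathbf{H}\ell)(p,t)-(\mathcal{V}^{k+2}_\Phi\mathbf{H}\ell)(p',t')|}{d(p,p')^\alpha+|t-t'|^{\alpha/2}}\le C\,\|\ell\|_{k,\alpha},
\]
which splits into a spatial Hölder difference at fixed $t$, denoted HDS below, and a temporal Hölder difference at fixed $p$, denoted HDT. I would treat HDS first: localise the difference using the standard trick (either ``near'' where $d(p,p')$ dominates the natural scale dictated by $\tau$, or ``far'' where $d(p,p')$ is small compared to $\tau$), giving respectively a direct triangle-inequality estimate of the two sup-norms, and a mean-value estimate bringing in an extra $\Phi$-derivative that is compensated by extra vanishing of $\beta^*H$ at $\tb/\td$. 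In each regime of Figures \ref{firstblowup}--\ref{thirdblowup}, I then pass to the projective coordinates \eqref{lowerleftcorner}, \eqref{lowerrightcorner}, \eqref{coordinatesnearfd}, \eqref{coordiantesneartd}, rewrite the $d$-function of \eqref{dist-func} in those coordinates, and bound the integral using infinite vanishing at $\lf,\rf,\ff,\tb$ and the borderline $\tau^{-m-2}$ behavior at $\td$ tempered by $\tau^{m+1}$ in the volume form and the compact support of $G_{0}$ as $\|(\mathcal{S},\mathcal{U},\mathcal{Z})\|\to\infty$. The HDT step (handled by what the paper calls \S \ref{HDTSection}) is similar: one writes the difference as two separate convolutions over $[0,t']$ and $[t',t]$ and bounds each by the same polyhomogeneous analysis, with the extra singularity in $\tau$ traded against $|t-t'|^{\alpha/2}$.

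I expect the main obstacle to be the regime near $\fd\cap\td$, because this is precisely where the heat kernel realises its strongest singularity $\tau^{-m-2}$ after two $\Phi$-derivatives, where the support of the kernel in $(\mathcal{S},\mathcal{U},\mathcal{Z})$ is still genuinely $\Phi$-unbounded, and where the splitting between ``near'' and ``far'' differences has to be performed using the correct scale-invariant radius. Obtaining an $\alpha$-Hölder bound there requires not only the size estimate in \eqref{ABHK3} but the subtraction $\ell(\tilde p,\tilde t)-\ell(p,\tilde t)$ inside the integral (so that the full Hölder seminorm of $\ell$ appears on the right), which in turn requires carefully rewriting the scaled distances in $(\mathcal{S},\mathcal{U},\mathcal{Z})$-coordinates against the lifted $d(p,p')$. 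The other, easier, technical nuisance is verifying that one can commute the two $\Phi$-derivatives through the convolution without loss: this follows because $\Delta_\Phi$ is self-adjoint and $\mathcal{V}_\Phi$ is a $C^\infty(\overline M)$-module, so derivatives hitting the kernel are controlled by the polyhomogeneous expansion already supplied in \S \ref{LiftsSection}. Once all six boundary faces are treated, reassembling the pieces using a partition of unity subordinate to the regimes 1--5 and invoking the Banach-space norm equivalence from \cite[Proposition 3.1]{bahuaud2014yamabe} yields the claimed boundedness $\mathbf{H}:x^\gamma C^{k,\alpha}_\Phi\to x^\gamma C^{k+2,\alpha}_\Phi$.
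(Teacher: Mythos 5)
Your overall strategy coincides with the paper's: conjugate by $x^{\gamma}$ to reduce to the unweighted case, reduce the claim to the boundedness of $\mathbf{G}=V\mathbf{H}_{\gamma}$ on $C^{\alpha}_{\Phi}$ for $V\in\{\Id\}\cup\mathcal{V}_{\Phi}\cup\mathcal{V}^{2}_{\Phi}$, split each Hölder difference into a near region $M^{+}$ and a far region $M^{-}$ (with a mean--value argument in the far region), and run the estimates regime by regime on $M^{2}_{h}$ using Theorem \ref{MohammadThm} and the volume--form lifts, with the only delicate face being $\fd\cap\td$. All of that matches \S\ref{HDSSection}--\S\ref{SNSection}.

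There is, however, one genuine gap. You correctly note that near $\fd\cap\td$ one must insert the subtraction $\ell(\widetilde{p},\widetilde{t})-\ell(p,\widetilde{t})$ to trade the borderline $\tau^{-1}\,\di\tau$ singularity for $\tau^{-1+\alpha}\,\di\tau$, but you never say how to control the leftover term
$\int_{0}^{t}\int_{M}VH_{\gamma}(t-\widetilde{t},p,\widetilde{p})\,\ell(p,\widetilde{t})\,\dvol_{\Phi}(\widetilde{p})\,\di\widetilde{t}$,
which still carries the full non--integrable singularity and cannot be handled by size estimates alone. This is precisely where the paper uses two ingredients absent from your proposal: (i) stochastic completeness of $(M,g_{\Phi})$, proved in \S\ref{maxsection} exactly for this purpose, which for $V=\Id$ collapses the spatial integral to $1$ and reduces the terms $I_{3,2}$, $L_{4}-L_{5}$ and $J_{2}$ to elementary one--dimensional integrals (and makes several differences vanish identically); and (ii) an integration by parts in the $\mathcal{S}$--variable for the derivative terms ($I_{1,3}$ and $I_{3}$), exploiting that $\ell(p,\widetilde{t})$ is constant in $\widetilde{p}$ so the $\partial_{\mathcal{S}}$ landing on the lifted volume density $h$ produces a compensating factor $x^{2}\tau$ that cancels the $\xi^{-2}\tau^{-1}$ singularity, while the boundary term is controlled on $\partial M^{+}$. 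Without naming these two mechanisms your plan does not close the estimate at $\fd\cap\td$; self--adjointness of $\Delta_{\Phi}$ and the module structure of $\mathcal{V}_{\Phi}$, which you invoke instead, do not substitute for them.
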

\begin{proof}
We will prove the statement for $k=0$, that is 
$$\mathbf{H}:x^\gamma C^{\alpha}_\Phi(M\times [0,T])\rightarrow x^\gamma C^{2,\alpha}_\Phi(M\times[0,T]).$$
The more general case can be proved similarly with additional integration by part argument near $\td$ and by employing the vanishing order of the heat kernel near the boundary $\partial M^{2}_{h}$ (similar argument has been employed in \cite{bahuaud2014yamabe}). Furthermore, note that, for $\overline{u}\in x^\gamma C^\alpha_\Phi(M\times[0,T])$, there exists some $u\in C^\alpha_\Phi(M\times [0,T])$ so that $\overline{u}=x^\gamma u$. In particular, it follows that $H\overline{u}$ lies in $x^\gamma C^{2,\alpha}_\Phi(M\times[0,T])$ if and only if $x^{-\gamma} Hx^\gamma u$ lies in $C^{2,\alpha}_\Phi(M\times [0,T])$.
This is equivalent to prove that 
\begin{equation}\label{Hgamma}
\mathbf{H}_{\gamma} := M(x^{-\gamma})\circ \mathbf{H}\circ M(x^{\gamma}):C^\alpha_\Phi(M\times[0,T])\rightarrow C^{2,\alpha}_\Phi(M\times[0,T]),
\end{equation}
is bounded, with $M(x^{\gamma})$ being the ''multiplication by $x^{\gamma}$'' operator.  
Moreover, from (\ref{Holdernoweights}), it follows that proving \eqref{ClaimThm5.1} is equivalent to prove that the operator $\mathbf{G}$, defined by $\mathbf{G}=V\mathbf{H}_{\gamma}$ with $V\in\{\Id,\mathcal{V}_\Phi,\mathcal{V}^2_\Phi\}$, is a bounded operator mapping
$$\mathbf{G}:C^\alpha_\Phi(M\times[0,T])\rightarrow C^\alpha_\Phi(M\times [0,T]).$$
Therefore, given a function $u$ in $C^\alpha_\Phi(M\times[0,T])$, the goal is to prove
\begin{equation}\label{AimedInequality}
\|\mathbf{G}u\|_\alpha\le c\|u\|_\alpha
\end{equation}
for some uniform constant $c>0$. This will be obtained directly by estimating $\|\mathbf{G}u\|_\alpha$. From the definition of the $\alpha$-norm in (\ref{alphanorm}) we find
$$\|\mathbf{G}u\|_\alpha=[\mathbf{G}u]_\alpha+\|\mathbf{G}u\|_\infty.$$
One can see that 
$$[\mathbf{G}u]_\alpha\le \sup_{\substack{p,p'\in M\\p\neq p'}}\frac{|\mathbf{G}u(p,t)-\mathbf{G}u(p',t)|}{\di (p,p')^\alpha}+\sup_{\substack{t,t'\ge 0\\t\neq t'}}\frac{|\mathbf{G}u(p,t)-\mathbf{G}u(p,t')|}{|t-t'|^{\alpha/2}},$$
leading to
\begin{align*}
\|\mathbf{G}u\|_\alpha\le& \sup_{\substack{p,p'\in M\\p\neq p'}}\frac{|\mathbf{G}u(p,t)-\mathbf{G}u(p',t)|}{\di (p,p')^\alpha}+\sup_{\substack{t,t'\ge 0\\t\neq t'}}\frac{|\mathbf{G}u(p,t)-\mathbf{G}u(p,t')|}{|t-t'|^{\alpha/2}}+\|\mathbf{G}u\|_\infty.
\end{align*}
Thus \eqref{AimedInequality} is satisfied if the following are satisfied
\begin{align}
&|\mathbf{G} u(p,t)-\mathbf{G}u(p',t)|\le c\|u\|_\alpha d(p,p')^\alpha, \label{HDS}\\
&|\mathbf{G} u(p,t)-\mathbf{G}u(p,t')|\le c\|u\|_\alpha |t-t'|^{\alpha/2}, \label{HDT}\\
&|\mathbf{G}u(p,t)|\le c\|u\|_\alpha. \label{SN}
\end{align}
We will therefore proceed in three steps:
\begin{itemize}
\item[i)] Uniform estimates of H\"{o}lder differences in space \eqref{HDS},
\item[ii)]Uniform estimates of H\"{o}lder differences in time \eqref{HDT},
\item[iii)]Uniform estimates of the supremum norm \eqref{SN}.
\end{itemize}
These three steps will be treated separately in sections \ref{HDSSection}, \ref{HDTSection} and \ref{SNSection} respectively.
\end{proof}
From Theorem \ref{MappingPropertiesTHM} other mapping properties can be derived.
\begin{thm}\label{SecondMappingPropertyTHM}
Let $M$ be an $m$ dimensional manifold with fibered boundary equipped with a $\Phi$-metric. The following operators acting by convolution in time
\begin{align*}
	   &\mathbf{H}:x^{\gamma}C^{k,\alpha}_{\Phi}(M\times[0,T])\rightarrow \sqrt{t}x^{\gamma}C^{k+1,\alpha}_{\Phi}(M\times[0,T]), \\
	   &\mathbf{H}:x^{\gamma}C^{k,\alpha}_{\Phi}(M\times [0,T])\rightarrow t^{\alpha/2}x^{\gamma}C^{k+2}_{\Phi}(M\times [0,T])
	\end{align*}
are bounded.
\end{thm}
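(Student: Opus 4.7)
The plan is to follow the three-step framework (HDS, HDT, SN) used in the proof of Theorem \ref{MappingPropertiesTHM}. Both preliminary reductions from there remain available: conjugation by $M(x^\gamma)$ as in \eqref{Hgamma} reduces the problem to $\gamma = 0$, and the passage from $k = 0$ to general $k$ uses the same commutation and integration-by-parts arguments (relying on the heat equation $\partial_t \mathbf{H} = \Id - \Delta_\Phi \mathbf{H}$ and the infinite vanishing of $\beta^*H$ on lf, rf, ff and tb). It therefore suffices to show, for $k=\gamma=0$, that
$$\mathbf{G}_1 := t^{-1/2}\,V\,\mathbf{H},\; V\in\{\Id\}\cup\mathcal{V}_\Phi, \quad \mathbf{G}_2 := t^{-\alpha/2}\,V_1 V_2\,\mathbf{H},\; V_i\in\{\Id\}\cup\mathcal{V}_\Phi,$$
are bounded from $C^\alpha_\Phi(M\times[0,T])$ into $C^\alpha_\Phi(M\times[0,T])$ and $C^0_\Phi(M\times[0,T])$, respectively.

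For $\mathbf{G}_1$ the gain is immediate from the heat-kernel asymptotics on $M^2_h$: in Regime 5 one has $\beta^*(VH) \sim \tau^{-m-1}G_0$, one full power of $\tau$ better than the bound $\beta^*(V^2 H) \sim \tau^{-m-2}G_0$ from \eqref{ABHK3} used in Theorem \ref{MappingPropertiesTHM}. After pairing with the volume factor $\tau^{m+1}$ and integrating in time, this produces an extra factor of $\tau = \sqrt{t-s}$ in every term of the (HDS), (HDT) and (SN) estimates, which is exactly absorbed by the prefactor $t^{-1/2}$ in $\mathbf{G}_1$. On the remaining regimes the lifted kernel vanishes to infinite order on lf, rf, ff and tb, so the case analysis of \S \ref{HDSSection}--\S \ref{SNSection} applies essentially verbatim.

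For $\mathbf{G}_2$ only the supremum bound is required, since the target carries no Hölder exponent. The key ingredient is the stochastic completeness of $(M,g_\Phi)$ proved in \S \ref{maxsection}: since $\int_M H(t-s,p,q)\dvol_\Phi(q) = 1$, pulling the $p$-derivatives $V_1 V_2$ inside the spatial integral annihilates the constant $u(t,p)$ and yields
$$V_1 V_2\,\mathbf{H}u(p,t) = \int_0^t\!\!\int_M V_1 V_2 H(t-s,p,q)\bigl[u(s,q)-u(t,p)\bigr]\dvol_\Phi(q)\,ds.$$
Combining the Hölder bound $|u(s,q)-u(t,p)| \le [u]_\alpha\bigl(d(p,q)^\alpha + (t-s)^{\alpha/2}\bigr)$ with the Regime 5 asymptotic $\beta^*(V_1 V_2 H)\cdot\beta^*(\dvol_\Phi\di\widetilde{t}) \sim \tau^{-1}\di\mathcal{S}\di\mathcal{U}\di\mathcal{Z}\di\tau$, and using that in these projective coordinates $d(p,q) \sim \tau$, the inner integral is of order $(t-s)^{-1+\alpha/2}$, so integration in $s\in[0,t]$ yields exactly the factor $t^{\alpha/2}$. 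Continuity up to $\partial\overline{M}$ then follows from the infinite vanishing of $\beta^*H$ on the remaining faces, as in the main theorem.

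The main obstacle is the scaling analysis of the $d(p,q)^\alpha$ contribution in the near-diagonal regime for $\mathbf{G}_2$: one must verify that, once pulled back to the projective coordinates $(\mathcal{S},\mathcal{U},\mathcal{Z})$ of the last blow-up, it decouples cleanly into a convergent $(\mathcal{S},\mathcal{U},\mathcal{Z})$-integral times the expected power of $\tau$. Once this is established, the regime-by-regime bookkeeping across the remaining faces of $M^2_h$ is parallel to that of Theorem \ref{MappingPropertiesTHM} and introduces no essentially new difficulties beyond tracking the additional powers of $\tau$.
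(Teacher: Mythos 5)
Your proposal is correct and follows essentially the same route as the paper: after the conjugation/reduction to $\gamma=0$, the first mapping property is obtained by observing that dropping one order of $\Phi$-derivative improves the td-asymptotics by one power of $\tau$, which exactly compensates the $\tau^{-1}$ coming from the $t^{-1/2}$ prefactor, so the integrands in the (HDS), (HDT) and (SN) estimates have the same asymptotics as in Theorem \ref{MappingPropertiesTHM}. Your treatment of the second mapping property (stochastic completeness plus the $\tau^{-1+\alpha}$ sup-norm computation yielding $t^{\alpha/2}$) is precisely the argument the paper invokes when it says the second property ``follows along the same lines,'' and it matches the $J_1$ estimate of \S\ref{SNSection}.
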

\begin{proof}
We will present only the argument for the first mapping property as the second follows along the same lines.

The same argument as in the previous result leads to an equivalent formulation of the statement.
That is, one has to prove that the operator 
$$M(t^{-1/2}x^{-\gamma})\circ \mathbf{H}\circ M(x^{\gamma}):C^\alpha_\Phi(M\times[0,T])\rightarrow C^{1,\alpha}_\Phi(M\times[0,T])$$
is bounded. 
As in the previous theorem, one deduces that the above is equivalent to prove that the operator $\mathbf{G}_t$, defined by $\mathbf{G}_{t}u=V(t^{-1/2}x^{-\gamma}\mathbf{H}x^\gamma)u$ with $V \in \{id,\mathcal{V}_{\phi}\}$, mapping
$$\mathbf{G}_t:C^\alpha_\Phi(M\times[0,T])\rightarrow C^\alpha_\Phi(M\times[0,T])$$
is bounded. 
One has
\[(\mathbf{G}_{t} u)(p,t) = \int_{0}^{t}\int_{M}V((t-\widetilde{t})^{-1/2}H_{\gamma}(t-\widetilde{t},p,\widetilde{p}))u(\widetilde{p},\widetilde{t})\dvol_{\Phi}(\widetilde{p})\di \widetilde{t};\]
where $H_\gamma$ is defined in \eqref{Hgamma}.
The estimates in sections \ref{HDSSection}, \ref{HDTSection} and \ref{SNSection} will already cover the case $V\in\{\Id,\mathcal{V}_{\Phi}\}$. Moreover, it is crucial to note that we are no longer considering elements in $\mathcal{V}_{\Phi}^{2}$, which will lead to an extra $\tau$ term. On the other hand, the term $(t-\widetilde{t})^{-1/2}$ inside the integrand lifts to an extra $\tau^{-1}$ in every region of $M^{2}_{h}$.  
This means that the presence of the term $(t-\widetilde{t})^{-1/2}$ is proportionally compensated by the absence of second order $\Phi$-differential operators (i.e. elements in $\mathcal{V}^2_{\Phi}$).
Thus, in attempting to get these estimates following the same computations as in the upcoming sections, the integrands obtained will have the exact same asymptotics.
\end{proof}

\section{Estimates of H\"{o}lder differences in space} \label{HDSSection}

The aim of this section is to prove the inequality in \eqref{HDS}. 
Consider $p,p'$ to be some fixed points in $M$ and set 
$$M^+=\left\{\widetilde{p}\in M\,\big|\,d(p,\widetilde{p})\le 3d(p,p')\right\},\;\;M^-=\left\{\widetilde{p}\in M\,\big|\,d(p,\widetilde{p})\ge 3d(p,p')\right\}.$$
Let us denote by $V$ any element which is either the identity, a $\Phi$-derivative or a second order $\Phi$-differential operator.
For any $u$ function in $ C^{\alpha}_{\Phi}(M\times [0,T])$, one has 
$${\bf G} u(s,p)-{\bf G} u(s,p')=I_1+I_2+I_{3},$$
where, for $G$ the kernel of $\textbf{G}$, we have
\begin{align*}
&I_1=\int_0^s\int_{M^+}\left[G(s-\widetilde{s},p,\widetilde{p})-G(s-\widetilde{s},p',\widetilde{p})\right]\left[u(\widetilde{s},\widetilde{p})-u(\widetilde{s},p)\right]\dvol_\Phi(\widetilde{p})\di\widetilde{s},\\
&I_2=\int_0^s\int_{M^-}\left[G(s-\widetilde{s},p,\widetilde{p})-G(s-\widetilde{s},p',\widetilde{p})\right]\left[u(\widetilde{s},\widetilde{p})-u(\widetilde{s},p)\right]\dvol_\Phi(\widetilde{p})\di\widetilde{s},\\
&I_3=\int_0^s\int_{M}\left[G(s-\widetilde{s},p,\widetilde{p})-G(s-\widetilde{s},p',\widetilde{p})\right]u(\widetilde{s},p)\dvol_\Phi(\widetilde{p})\di\widetilde{s}.
\end{align*}
Hence, it is clear that \eqref{HDS} is satisfied if $|I_j|\le c\|u\|_\alpha\di(p,p')^\alpha$ for $j=1,2,3$.
\medskip

Since the heat kernel $H$, hence also $G$, is smooth in the interior of $M^{2}_{h}$, the claimed estimates need only to be provided near the boundary hypersurfaces of $M^{2}_{h}$. 
Moreover, as stated in Theorem \ref{MohammadThm}, $\beta^*H$, hence the lift of $G$ as well, vanishes to infinite order away from $\fd\cup\td$ (i.e. in regimes 1 and 2), resulting in trivial estimates.
We will therefore focus solely on the estimates of $I_j$ near $\fd\cup\td$ for every $j=1,2,3$.
In estimating the integrals we will assume, without loss of generality, $G$ to be compactly supported in the regime of interest.
In conclusion, in order to simplify the notation, we will identify the integration regions $M$, $M^+$ and $M^-$ with their lifts. 
Also, we will simply denote by $y$ and $z$ either one or all of the coordinates $y_{i}$ and $z_{j}$ respectively.  The same is true for $y'$ and $z'$.

\subsection{Estimates for $I_2$}

First of all notice that, in this setting, $\widetilde{p}$ is ranging in $M^-$.
We begin by proving the following useful fact.
\begin{lem} \label{ineq}
Let $p''$ be a point in $M$ such that $d(p',p'')\le d(p,p')$. For every point $\widetilde{p}$ in $M^-$, one has
$$\frac{1}{3} d(p,\widetilde{p})\le d(p'',\widetilde{p}).$$
\end{lem}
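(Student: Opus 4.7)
The plan is to deduce the claim from two applications of the triangle inequality for the distance $d$, which by the discussion around \eqref{alphanorm} is the Riemannian distance of the conformal metric $x^4 g_\Phi$ and is hence a genuine distance function on $M$. With that in mind the proof reduces to chaining three short inequalities, so there is essentially no combinatorial work beyond keeping track of the constant $3$ that defines $M^-$.

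First I would bound the displacement from $p$ to $p''$ by inserting $p'$ between them and using the hypothesis $d(p',p'')\le d(p,p')$:
\[
d(p,p'') \;\le\; d(p,p') + d(p',p'') \;\le\; 2\, d(p,p').
\]
Second, I would invoke the defining property of $M^-$, namely $3\, d(p,p')\le d(p,\widetilde{p})$, to convert the preceding bound into
\[
d(p,p'') \;\le\; \tfrac{2}{3}\, d(p,\widetilde{p}).
\]
Finally, the reverse triangle inequality applied at $p''$ together with the bound just obtained gives
\[
d(p'',\widetilde{p}) \;\ge\; d(p,\widetilde{p}) - d(p,p'') \;\ge\; d(p,\widetilde{p}) - \tfrac{2}{3}\, d(p,\widetilde{p}) \;=\; \tfrac{1}{3}\, d(p,\widetilde{p}),
\]
which is exactly the asserted estimate.

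No real obstacle is expected. The only point that deserves a moment's reflection is that the explicit formula in \eqref{dist-func} is merely a local near-boundary expression for $d$, valid up to bounded multiplicative constants; however, since $d$ itself is a genuine distance on $M$, the two triangle inequalities above hold without any hidden constants, and the factor $\tfrac{1}{3}$ in the conclusion is precisely the reciprocal of the constant $3$ fixed in the definition of $M^-$. This lemma will then feed into the subsequent estimate of $I_2$, where it is used to swap the base point $p$ for an auxiliary point $p''$ on the segment between $p$ and $p'$ inside the kernel $G(s-\widetilde{s},\cdot,\widetilde{p})$, while keeping the lower bound on $d(\cdot,\widetilde{p})$ under control up to a harmless universal constant.
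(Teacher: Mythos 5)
Your proof is correct and is essentially the same as the paper's: both rely only on the triangle inequality through $p'$ and $p''$, the hypothesis $d(p',p'')\le d(p,p')$, and the defining inequality $3d(p,p')\le d(p,\widetilde{p})$ of $M^-$, differing only in that you first bound $d(p,p'')\le \frac{2}{3}d(p,\widetilde{p})$ and then apply the reverse triangle inequality, whereas the paper chains the inequalities into $d(p,\widetilde{p})\le \frac{2}{3}d(p,\widetilde{p})+d(p'',\widetilde{p})$ and cancels. No gap.
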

\begin{proof}
Triangle inequality, the assumption on $p''$ and the fact that $\widetilde{p}$ lies in $M^{-}$, imply
\begin{align*}
d(p,\widetilde{p})&\le d(p,p')+d(p',\widetilde{p})\le d(p,p')+d(p',p'')+d(p'',\widetilde{p})\\
&\le d(p,p')+d(p,p')+d(p'',\widetilde{p})=2d(p,p')+d(p'',\widetilde{p})\\
&\le\frac{2}{3}d(p,\widetilde{p})+d(p'',\widetilde{p}).
\end{align*}
The result follows by cancellation.
\end{proof}
\medskip
To estimate $I_2$, employ the Mean Value Theorem to obtain
\begin{equation*}
\begin{split}
I_2=&|x-x'|\int_0^s\int_{M^-}\partial_\xi G\big|_{(s-\widetilde{s},\xi,y,z,\widetilde{x},\widetilde{y},\widetilde{z})}\left[u(\widetilde{s},\widetilde{x},\widetilde{y},\widetilde{z})-u(\widetilde{s},x,y,z)\right]\dvol_\Phi(\widetilde{p})\di\widetilde{s}\\
+&\|y-y'\|\int_0^s\int_{M^-}\partial_\eta G\big|_{(s-\widetilde{s},x',\eta,z,\widetilde{x},\widetilde{y},\widetilde{z})}\left[u(\widetilde{s},\widetilde{x},\widetilde{y},\widetilde{z})-u(\widetilde{s},x,y,z)\right]\dvol_\Phi(\widetilde{p})\di\widetilde{s}\\
+&\|z-z'\|\int_0^s\int_{M^-}\partial_\zeta G\big|_{(s-\widetilde{s},x',y',\zeta,\widetilde{x},\widetilde{y},\widetilde{z})}\left[u(\widetilde{s},\widetilde{x},\widetilde{y},\widetilde{z})-u(\widetilde{s},x,y,z)\right]\dvol_\Phi(\widetilde{p})\di\widetilde{s}.\\
\end{split}
\end{equation*}
Let $p''$ be a point in the set $\{(\xi,y,z),(x',\eta,z),(x',y',\zeta)\}$.
Clearly $p''$ satisfies either $d(p,p'') \le d(p,p')$ or $d(p',p'') \le d(p,p')$.
If the latter holds, Lemma \ref{ineq} gives $d(p,\widetilde{p}) \le 3d(p'',\widetilde{p})$.
Similarly, arguing by means of the triangle inequality, one sees that the same estimate holds if the former case is satisfied.
In conclusion, for every $\widetilde{p} \in M^{-}$, and for $p''$ as above,
$$d(p,\widetilde{p}) \le 3d(p'',\widetilde{p}).$$
Moreover, since $u$ lies in $C^{\alpha}_{\Phi}(M\times[0,T])$, it follows that
\begin{align*}
I_2\le & c\|u\|_{\alpha}|x-x'|\int_0^s\int_{M^-}\partial_\xi G(s-\widetilde{s},p'',\widetilde{p})d\left(p'',\widetilde{p}\right)^\alpha\dvol_\Phi(\widetilde{p})\di\widetilde{s}\\
&+c\|u\|_\alpha\|y-y'\|\int_0^s\int_{M^-}\partial_\eta G(s-\widetilde{s},p'',\widetilde{p})d\left(p'',\widetilde{p}\right)^\alpha\dvol_\Phi(\widetilde{p})\di\widetilde{s}\\
&+c\|u\|_\alpha\|z-z'\|\int_0^s\int_{M^-}\partial_\zeta G(s-\widetilde{s},p'',\widetilde{p})d\left(p'',\widetilde{p}\right)^\alpha\dvol_\Phi(\widetilde{p})\di\widetilde{s}.
\end{align*}
In the above, with abuse of notation, we denoted by $p''$ any of the occurrences of the point arising from the Mean value theorem.
More precisely, in the first integral $p''$ has coordinates $(\xi,y,z)$, in the second $(x',\eta,z)$ and in the third $(x',y',\zeta)$.
For readability reasons, we will denote the summands in the estimate above respectively by $I_{2,1}$, $I_{2,2}$ and $I_{2,3}$.
Estimates for $I_{2,1}$, $I_{2,2}$ and $I_{2,3}$ can be obtained similarly.
Therefore we present explicit computation only for $I_{2,1}$.
\medskip

The formulae in \eqref{ABHK3} give us the asymptotic behavior of $\partial_{\xi}G$ in this regime.
Using projective coordinates 
$(\tau,x,y,z,\mathcal{S}',\mathcal{U}',\mathcal{Z}')$ given by
\[\mathcal{S}' = \dfrac{\widetilde{x}-\xi}{\xi^{2}}, \hspace{2mm} \mathcal{U}' = \dfrac{\widetilde{y}-y}{\xi}, \hspace{2mm} \mathcal{Z}' = \widetilde{z}-z \hspace{2mm} \mbox{and} \hspace{2mm} \tau = \sqrt{t - \widetilde{s}},\]
one has  
\begin{equation}\label{InequalityForI'_2withp''}
|I_{2,1}|\le c\|u\|_\alpha|x-x'|\int_{0}^{\sqrt{s}}\int_{M^-}\sigma^{-m-2}\xi^{-2}G_{0}\beta^*(d(p'',\widetilde{p})^\alpha)
\di\mathcal{S}'\di\mathcal{U}'\di\mathcal{Z}'\di \tau
\end{equation}
with $G_{0}$ being bounded. 

\medskip
Let us now analyse the distance $\beta^*(d(p'',\widetilde{p})^\alpha)$.
Recall that, near $\fd\cup \td$, $\xi \sim \widetilde{x}$.  
This implies, in particular, $\widetilde{\mathcal{S}''} = \widetilde{x} / \xi \sim 1$ 
and thus giving
\begin{align*}
    d((\xi,y,z),(\widetilde{x},\widetilde{y},\widetilde{z}))&= 
    \sqrt{|\xi-\tilde{x}|^{2}+(\xi+\tilde{x})^{2}\|y-\tilde{y}\|^{2}+(\xi+\tilde{x})^{4}\|z-\tilde{z}\|^{2}} \\
    & \sim \xi^2 \sqrt{ |\mathcal{S}'|^2 + |\mathcal{U}'|^2 + |\mathcal{Z}'|^2 } \\
    &=:\xi^2 r(\mathcal{S}',\mathcal{U}',\mathcal{Z}').
\end{align*}
Note that the function $r$ is nothing but the radial distance in polar coordinates from the origin.  
From the above we conclude the existence of some constant $c$ such that
\begin{equation}
\beta^*\left(d\left((\xi,y,z),(\widetilde{x},\widetilde{y},\widetilde{z})\right)^\alpha\right)\le c(\xi^2 r)^{\alpha}.
\end{equation}
By using $r$ as the radial coordinate in $\mathbb{R}^{m}$ we can perform a change of coordinates in \eqref{InequalityForI'_2withp''}, leading to
$$|I_{2,1}|\le c\|u\|_\alpha|x-x'|\int_{0}^{\sqrt{s}}\int_{M^{-}} \sigma^{-m-2}\xi^{-2+2\alpha}r^{m-1+\alpha}G_{0}\di r\di \mbox{(angle)}\di\tau. $$
Finally, setting $\sigma=r^{-1}\tau$, it follows that the asymptotic behavior of $\varsigma^{-1}$ is (cf. \eqref{coordiantesneartd})
$$\varsigma^{-1}\sim \sqrt{|\mathcal{S}|^2+\|\mathcal{U}\|^2+\|\mathcal{Z}\|^2}.$$
This implies that integrating $G_{0}$ against any negative power of $\varsigma$ leads to a bounded term. 
Moreover, for $r$ defined as above,  $M^{-} \subset \{\xi^{-2}d(p,p') \le cr\}$ for some constant $c>0$.  
Thus, once the angular variables are being integrated out, it follows, by performing yet another change of coordinates given by $\tau\mapsto\sigma$,
\begin{align*}
|I'_2|&\le c\|u\|_\alpha|x-x'|\int_{\xi^{-2} d(p,p')}^\infty r^{-2+\alpha}\xi^{-2+2\alpha}\di r\\
&=c\|u\|_{\alpha} |x-x'|\xi^{-2+2\alpha}(\xi^{-2}d(p,p'))^{-1+\alpha}\\
&\le c\|u\|_\alpha d(p,p')^{\alpha},
\end{align*}
as claimed.

\subsection{Estimates of $I_{1}$}

As for the estimates of $I_2$, we see that
\begin{align*}
I_{1} = &\int_{0}^{s}\int_{M^{+}} G(s-\widetilde{s},p,\widetilde{p})[u(\widetilde{p},\widetilde{s})-u(p,\widetilde{s})]\dvol_{\Phi}(\widetilde{p})\di \widetilde{s} \\ 
&- \int_{0}^{s}\int_{M^{+}} G(s-\widetilde{s},p',\widetilde{p})[u(\widetilde{p},\widetilde{s})-u(p',\widetilde{s})]\dvol_{\Phi}(\widetilde{p})\di \widetilde{s} \\
&+ \int_{0}^{s}\int_{M^{+}} G(s-\widetilde{s},p',\widetilde{p})[u(p,\widetilde{s})-u(p',\widetilde{s})]\dvol_{\Phi}(\widetilde{p})\di \widetilde{s} \\
=: &I_{1,1} - I_{1,2} + I_{1,3}.
\end{align*}
Clearly the estimates for $I_{1,1}$ and $I_{1,2}$ will be similar, thus we present the full computations only for $I_{1,1}$.
Moreover, as for $I_2$, estimates away from $\fd\cup\td$ are trivial thus we will focus on the estimates near $\fd \cup \td$.
%

\subsubsection*{\textbf{Estimate of $I_{1,1}$}}

Near $\fd \cup \td$, the asymptotics of $G$ are given by the expression in \eqref{ABHK3}.  In particular  $G\sim\sigma^{-m-2}G_{0}$, with $G_{0}$ vanishing to infinite order when near $\|(\mathcal{S},\mathcal{U},\mathcal{Z})\|\rightarrow \infty$.
Let us choose projective coordinates given by $(\tau,x,y,z,\mathcal{S}',\mathcal{U}',\mathcal{Z}')$, with 
\[\mathcal{S}' = \dfrac{\widetilde{x}-x}{x^{2}}, \hspace{2mm} \mathcal{U}' = \dfrac{\widetilde{y}-y}{x}, \hspace{2mm} \mathcal{Z}' = \widetilde{z}-z \hspace{2mm} \mbox{and} \hspace{2mm} \tau = \sqrt{s - \widetilde{s}}.\]
Recall that, in these projective coordinates, the lift of the volume form is expressed as in \eqref{SecondBlowUpProjCoord1}, resulting in
\begin{align*}
|I_{1,1}| &\le \|u\|_{\alpha}\int_{0}^{\sqrt{s}}\int_{M^{+}} \sigma^{-m-1}G_{0}\beta^{*}d((x,y,z),(\widetilde{x},\widetilde{y},\widetilde{z}))^{\alpha} \di \mathcal{S}' \di \mathcal{U}' \di \mathcal{Z}' \di \sigma. \\
\end{align*}
Furthermore, near $\fd\cup\td$, $x\sim \widetilde{x}$.  
Thus, as already done for the estimates for $I_2$, set $r(\mathcal{S}',\mathcal{U}',\mathcal{Z}') := \sqrt{|\mathcal{S}'|^{2}+\|\mathcal{U}'\|^{2}+\|\mathcal{Z}'\|^{2}}$. 
This implies 
\begin{align*}
    \beta^{*}d((x,y,z),(\widetilde{x},\widetilde{y},\widetilde{z})) &= \sqrt{|x-\tilde{x}|^{2}+(x+\tilde{x})^{2}\|y-\tilde{y}\|^{2}+(x+\tilde{x})^{4}\|z-\tilde{z}\|^{2}}\\
    &\sim x^{2}\sqrt{|\mathcal{S}'|^{2}+\|\mathcal{U}'\|^{2}+\|\mathcal{Z}'\|^{2}}\\
    &= c x^{2} r(\mathcal{S}',\mathcal{U}',\mathcal{Z}').
\end{align*}
It follows, $M^{+} = \{r \le cx^{-2}d(p,p')\}$ for some constant $c>0$.

\medskip
Let us now denote $\varsigma = \tau/r$. 
Arguing as in the estimates for $I_2$, $r$ can be thought as the radial distance in $\mathbb{R}^m$ with coordinates given by $
(\mathcal{S}',\mathcal{U}',\mathcal{Z}')$.
We can therefore consider polar coordinates and perform a change of coordinates in the integral above.
Integrating out once again the angular coordinates, we find
\[|I_{1,1}| \le c\|u\|_{\alpha}x^{2\alpha}\int_{I(\varsigma)}\int_{0}^{x^{-2}d(p,p')} \varsigma^{-m-1}r^{-1+\alpha}G_{0} \di r \di \sigma.\]
The estimate now follows by noticing $\varsigma^{-m-1}G_{0}$ to be bounded (due to the decay properties of $G_{0}$).

\subsubsection*{\textbf{Estimate of $I_{1,3}$}}

As mentioned earlier, the estimates for $I_{1,3}$ follow along slightly different lines from the one for $I_{1,1}$, since it follows from employing integration by parts.
First of all we consider projective coordinates $(\tau,x,y,z,\mathcal{S},\mathcal{U},\mathcal{Z})$ with
$$\mathcal{S}=\frac{\widetilde{x}-x}{\tau x^2},\;\;\mathcal{U}=\frac{y-\widetilde{y}}{\tau x},\;\;\mathcal{Z}=\frac{z-\widetilde{z}}{\tau}.$$
Note that the ``worst case scenario'' for $I_{1,3}$ is given by $G = \sigma^{-m-2}(V_{1}V_{2}G_{0})$ with both $V_{1},V_{2} \in \{\partial_{\mathcal{S}},\partial_{\mathcal{U}},\partial_{\mathcal{Z}}\}$.
For the sake of simplicity, since the general case is similar, we assume $V_{1} = \partial_{\mathcal{S}}$.  
On the other hand, for fixed $(\tau,\mathcal{U},\mathcal{Z})$ one has $M^{+} = \{|\mathcal{S}| \le r(\tau,\mathcal{U},\mathcal{Z})\}$, where this expression for $M^{+}$ comes from the fact that $r$ is taken originally in terms of $(\mathcal{S}',\mathcal{U}',\mathcal{Z}') = (\tau\mathcal{S},\tau\mathcal{U},\tau\mathcal{Z})$. 
Hence
\[\beta^{*}(\dvol_{\Phi}(\widetilde{p})\di \widetilde{s}) = h(x+x^{2}\tau\mathcal{S},y+x\tau\mathcal{U},z+\tau\mathcal{Z}) \sigma^{m+1}\di \mathcal{S} \di \mathcal{U} \di \mathcal{Z} \di \tau,\]
for $h$ being smooth function.
Moreover, let us denote $\delta u:=[u(p,\widetilde{s})- u(p',\widetilde{s})]$.
Clearly $\delta u$ does not depend on $\widetilde{p}$, meaning that $I_{1,3}$ can be written as
\begin{align*}
I_{1,3} &= \int_{0}^{\sqrt{s}}\delta u\int_{M^{+}} \sigma^{-1}(\partial_{\mathcal{S}}V_{2}G_{0})h \di \mathcal{S} \di \mathcal{U}\di \mathcal{Z} \di \tau\\
&= \int_{0}^{\sqrt{s}}\delta u\int_{\partial M^{+}} \sigma^{-1}(V_{2}G_{0})\big|_{|\mathcal{S}| = r}h \di \mathcal{S} \di \mathcal{U}\di \mathcal{Z} \di \tau\\
&\hspace{4mm} - \int_{0}^{\sqrt{s}}\delta u\int_{ M^{+}} \sigma^{-1}(V_{2}G_{0})\partial_{\mathcal{S}}h \di \mathcal{S} \di \mathcal{U}\di \mathcal{Z} \di \tau\\
&=: I^{1}_{1,3} - I^{2}_{1,3}.
\end{align*}
We begin by noticing that the derivative of the smooth function $h$ with respect to $\mathcal{S}$ can be written as $\partial_{\mathcal{S}}h = x^2\sigma h'$.
The $\sigma$ appearing in this derivative cancels with the $\sigma^{-1}$.
Thus, since $x(V_{2}G_0)$ is bounded, the whole integral over $M^+$ in $I^2_{1,3}$ is bounded.  
The estimate follows by estimating $\delta u$ against the $\alpha$-norm of $u$ multiplied by the distance $d(p,p')^\alpha$.
\medskip

Let us now focus on the integral $I^{1}_{1,3}$.
For simplicity we will denote $V_{2}G_{0}$ just by $G'_{0}$. 
We begin by performing a change of coordinates in $I^1_{1,3}$.
In particular, we choose the projective coordinates $(\tau,x,y,z,\mathcal{S}',\mathcal{U}',\mathcal{Z}')$ with
$$\mathcal{S}'=\frac{\widetilde{x}-x}{x^2},\;\;\mathcal{U}'=\frac{\widetilde{y}-y}{x},\;\;\mathcal{Z}'=\widetilde{z}-z.$$
This accounts into a change of coordinates of the form $\mathcal{S}'=\tau\mathcal{S}$, \;$\mathcal{U}'=\tau\mathcal{U}$ and $\mathcal{Z}'=\tau\mathcal{Z}$, leading to 
\begin{align*}
|I^{1}_{1,3}| &\le \|u\|_{\alpha}\int_{0}^{\sqrt{s}}\int_{\partial M^{+}} \sigma^{-m}(G'_{0})\big|_{|\mathcal{S}| = r}\beta^{*}d((x,y,z),(\widetilde{x},\widetilde{y},\widetilde{z}))^{\alpha} \di U \di Z \di \tau.
\end{align*}
We proceed exactly as for the estimates of $I_{1,1}$.
This means we consider polar coordinates in $\mathbb{R}^{m-1}$ (with coordinates $(\mathcal{U}',\mathcal{Z}')$) and denote by $R$ the radial component.
Next, we set $\varsigma = \sigma/R$.
Furthermore, from the expression of $M^+$ above, one has $\{|\mathcal{S}|=r\}=\partial M^+=\{\widetilde{p}\,|\, d(p,\widetilde{p}) = 3d(p,p')\}$. 
In particular, taking $\widetilde{p}$ a point lying in the boundary of $\partial M^*$ of $M^+$,
\[2d(p,p') \le d(p,\widetilde{p}) \le 4d(p,p').\]
Integrating out the angular component results in 
\begin{align*}
|I^{1}_{1,3}| &\le \|u\|_{\alpha}\int_{0}^{\infty}\int_{0}^{4d(p,p')} R^{-1+\alpha}\varsigma^{-m}\left(\sqrt{\dfrac{|\mathcal{S}'|^{2}+\|\mathcal{U}'\|^{2}+\|\mathcal{Z}'\|^{2}}{\|\mathcal{U}'\|^{2}+\|\mathcal{Z}'\|^{2}}}\right)^{\alpha}(G'_{0})\big|_{|\mathcal{S}| = r} \di R \di \sigma \\
& \le c\|u\|_{\alpha}d(p,p')^{\alpha}.
\end{align*}
\begin{rmk}
Notice that, in the case $V = \Id$, the asymptotic behavior of the integrand will have an improvement by $\sigma^{2}$.  
This means, in particular, that integration by part will no longer be necessary. 
\end{rmk}

\subsection{Estimates of $I_{3}$}

As usual, let us assume $p=(x,y,z)$ and $p'=(x',y',z')$.
By adding and subtracting the same quantity, we can express $I_3$ as the sum of three integrals $I_{3,1}$, $I_{3,2}$ and $I_{3,3}$ as displayed below:
\begin{align*}
    I_{3} = &\int_{0}^{s}\int_{M}[G(s-\widetilde{s},p,\widetilde{p})-G(s-\widetilde{s},(x',y,z),\widetilde{p})]u(\widetilde{s},p)\dvol_{\Phi}(\widetilde{p})\di \widetilde{s} \\
    &+\int_{0}^{s}\int_{M}[G(s-\widetilde{s},(x',y,z),\widetilde{p})-G(s-\widetilde{s},(x',y',z),\widetilde{p})]u(\widetilde{s},p)\dvol_{\Phi}(\widetilde{p})\di \widetilde{s} \\
    &+\int_{0}^{s}\int_{M}[G(s-\widetilde{s},(x',y',z),\widetilde{p})-G(s-\widetilde{s},p',\widetilde{p})]u(\widetilde{s},p)\dvol_{\Phi}(\widetilde{p})\di \widetilde{s} \\
    =: &I_{3,1} + I_{3,2} + I_{3,3}.
\end{align*}
Our first aim is to show that the expression above can actually be reduced to $I_3=I_{3,1}$.
To see this we show that both $I_{3,2}$ and $I_{3,3}$ are vanishing.
Due to similarity we will prove only $I_{3,2}=0$.
\medskip

Recall that, as showed in \S \ref{StochPhiMflds}, $\Phi$-manifolds are stochastically complete.
Thus, by recalling the expression for the integral kernel $G$, it follows that $I_{3,2}$ can be written as
\begin{equation*}
\begin{split}
    I_{3,2} =& \int_{0}^{s}x^{\gamma}u(p,\widetilde{s})\left(\int_{M}[V(x^{-\gamma}H)(s-\widetilde{s},(x',y,z),\widetilde{p})\right. \\
    &-V(x^{-\gamma}H)(s-\widetilde{s},(x',y',z),\widetilde{p})]\dvol_{\Phi}(\widetilde{p})\bigg)\di\widetilde{s}\\
    =& \int_{0}^{s}((x')^{-\gamma}-(x')^{-\gamma})x^{\gamma}u(p,\widetilde{s})\di \widetilde{s} = 0.
\end{split}
\end{equation*}
We can now estimate $I_3$.
As for the previous cases, estimates away from $\fd\cup\td$ are trivial thus will not be presented here.
\medskip

The Mean Value Theorem allows to rewrite $I_{3}$ as follows:
\begin{align}
I_{3} &= |x-x'|\int_{0}^{s}\int_{M} \partial_{\xi}G\big|_{(s-\widetilde{s},\xi,y,z,\widetilde{x},\widetilde{y},\widetilde{z})}u(x,y,z,\widetilde{s})\dvol_{\Phi}(\widetilde{p})\di \widetilde{s}
\end{align}
In projective coordinate $(\tau,\xi,y,z,\mathcal{S},\mathcal{U},\mathcal{Z})$, with
$$\mathcal{S}=\frac{\widetilde{x}-\xi}{\tau \xi^2},\;\;\mathcal{U}=\frac{y-\widetilde{y}}{\tau \xi},\;\;\mathcal{Z}=\frac{z-\widetilde{z}}{\tau},$$
one finds that the lifted vector field $\beta^*(\partial_\xi)$ admits an expression of the form
\[\beta^{*}(\partial_{\xi}) = \partial_{\xi} - [2\xi^{-1}\mathcal{S} + \xi^{-2}\sigma^{-1}]\partial_{\mathcal{S}} - \xi^{-1}\mathcal{U}\partial_{\mathcal{U}}.\]
In particular, we conclude $\beta^{*}(\partial_{\xi}G) \sim \xi^{-2}\sigma^{-1}\partial_{\mathcal{S}}G'_{0} $.  
Note that the asymptotic behavior of $G'_{0}$ are similar to those of $G_0$ in $I_{1,3}$ near ${\td}$.
Furthermore, since the function $u(p,\widetilde{s})$ is constant with respect to the spatial integration (i.e. with respect to $\widetilde{p}$), integration by parts gives  
\begin{align*}
\int_{0}^{s}\int_{M}& \xi^{-2}\sigma^{-1}\partial_{\mathcal{S}}G'_{0} u(p,s-\sigma^{2}) h \di \mathcal{S} \di \mathcal{U}\di \mathcal{Z} \di \sigma\\
&=\int_{0}^{s}\int_{\partial_M} \xi^{-2}\sigma^{-1}G'_{0}\big|_{|\mathcal{S}|=\infty} u(p,s-\sigma^{2}) h \di \mathcal{S} \di \mathcal{U}\di \mathcal{Z} \di \sigma\\
&\hspace{4mm}-\int_{0}^{t}\int_{M} \xi^{-2}\sigma^{-1}G'_{0} u(p,s-\sigma^{2}) \partial_{\mathcal{S}}h \di \mathcal{S} \di \mathcal{U}\di \mathcal{Z} \di \sigma.
\end{align*}
Due to the decay properties of $H_{\gamma}$ near $\partial \overline{M}$ (cf. \eqref{ABHK3}), the integral along the boundary vanishes.
\medskip
  
Similarly to the estimates of $I^2_{1,3}$, we find $\partial_{\mathcal{S}} h = \xi^{2}\sigma h'$.
This implies $\tau\xi^2$ and $\tau^{-1}\xi^{-2}$ cancel out leading to 
\begin{align*}
\int_{0}^{s}\int_{M}& \xi^{-2}\tau^{-1}\partial_{\mathcal{S}}G'_{0} u(p,s-\tau^{2}) h \di \mathcal{S} \di \mathcal{U}\di \mathcal{Z} \di \tau\\
&=-\int_{0}^{s}\int_{M} G'_{0} u(p,s-\tau^{2}) h' \di \mathcal{S} \di \mathcal{U}\di \mathcal{Z} \di \tau.
\end{align*}
The estimate now follows by proceeding as for the estimates of $I^2_{1,3}$.
It is important to point out that, contrarily  to $I^2_{1,3}$, the boundary term is vanishing.
\medskip

With this last inequality, we conclude the proof of H\"{o}lder differences in space.

\section{Estimates for H\"{o}lder differences in time}\label{HDTSection}

In this section, we will prove the estimates stated in \eqref{HDT}.
Without loss of generality, we can consider $s< s'$. 
Indeed, in order to gain the estimates for $s'<s$, it will be enough to repeat all the upcoming estimates interchanging the role of $s$ and $s'$.
Moreover, we begin by proving the estimates under the initial further assumption $2s' - s \geqslant 0$ (i.e., $s' < s \le 2s'$).
At the end of the section, we will explain how to proceed for the other case, i.e. $2s'-s<0$.
\medskip

Let $T_-$, $T_+$ and $T'_+$ denote the intervals
\[T_{-} := [0,2s'-s], \hspace{2mm} T_{+} := [2s'-s,t] \hspace{2mm} \mbox{and} \hspace{2mm} T'_{+} := [2s'-s,s'].\]
As it has already been done for the estimates of H\"{o}lder differences in space (cf. \S \ref{HDSSection}), we denote by $\mathbf{G}$ the operator $\mathbf{G}=V\mathbf{H}_{\gamma}$ for $V \in \{\mbox{id}\}\cup \mathcal{V}_{\Phi}\cup \mathcal{V}^{2}_{\Phi}$.
Using the same argument Bahuaud and Vertman used in \cite[\S 3.2]{bahuaud2014yamabe}, we deduce
\begin{align*}
\mathbf{G} u(p,s) - \mathbf{G} u(p,s')
=  &|s-s'|\int_{T_{-}}\int_{M} \partial_{\theta}G\big|_{(\theta-\widetilde{s},p,\widetilde{p})}[u(\widetilde{p},\widetilde{s})-u(p,\widetilde{s})] \dvol_{\Phi}(\widetilde{p})\di \widetilde{s} \vspace{1mm}\\
&+ \int_{T_{+}}\int_{M} G(s-\widetilde{s},p,\widetilde{p})[u(\widetilde{p},\widetilde{s})-u(p,\widetilde{s})]\dvol_{\Phi}(\widetilde{p})\di \widetilde{s} \\
&- \int_{T'_{+}}\int_{M} G(s'-\widetilde{s},p,\widetilde{p})[u(\widetilde{p},\widetilde{s})-u(p,\widetilde{s})]\dvol_{\Phi}(\widetilde{p})\di \widetilde{s}\\
&+ \int_{0}^{s}\int_{M}G(s-\widetilde{s},p,\widetilde{p})u(p,\widetilde{s})\dvol_{\Phi}(\widetilde{p})\di \widetilde{s}\\
&- \int_{0}^{s'}\int_{M}G(s'-\widetilde{s},p,\widetilde{p})u(p,\widetilde{s})\dvol_{\Phi}(\widetilde{p})\di \widetilde{s}\\
=: &L_{1} + L_{2} - L_{3} + L_{4} - L_{5}
\end{align*}

First of all, notice that space-variable $p$ is constant in the integration.
Second, as pointed out in \S \ref{StochPhiMflds}, $(M,g_{\Phi})$ is stochastically complete, i.e. 
$$\int_M H(s,p,\widetilde{p})\dvol_{\Phi}(\widetilde{p})=1.$$
These two key observations put together gives us the following estimate:
\begin{align*}
    L_{4}-L_{5} = \int_{0}^{s}u(p,\widetilde{s})\di \widetilde{s} - \int_{0}^{s'}u(p,\widetilde{s})\di \widetilde{s}
    \le C\|u\|_{\infty}|s-s'|^{\alpha/2}.
\end{align*}
Therefore, it is clear that in order to prove the inequality claimed in \eqref{HDT}, it is only necessary to estimate $L_{1}, L_{2}$ and $L_{3}$.
However, due to similarities between the terms $L_{2}$ and $L_{3}$,
we will only present one of them.
In conclusion, in what follows we will present the estimates, at each regime, of the terms $L_1$ and $L_2$.
Moreover, as for the H\"{o}lder differences in space, estimates away from $\fd\cup \td$ are trivial and will, therefore, be omitted.

\subsection{Estimates for $L_{1}$}

In projective coordinates $(\tau,x,y,z,\mathcal{S},\mathcal{U},\mathcal{Z})$ with
$$\mathcal{S}=\frac{\widetilde{x}-x}{\tau x^2},\;\;\mathcal{U}=\frac{y-\widetilde{y}}{\tau x},\;\;\mathcal{Z}=\frac{z-\widetilde{z}}{\tau},$$  
the asymptotics of $H_{\gamma}$ near $\fd\cup\td$ are given by \eqref{ABHK3}.
In particular, it follows $\beta^{*}\partial_{\theta}G \sim \sigma^{-m-4}G'_{0}$, with $G'_{0}$ being polyhomogeneous and vanishing to infinite order when $\|(\mathcal{S},\mathcal{U},\mathcal{Z})\|\rightarrow \infty$.
Also, the volume form has asymptotics of the form 
\begin{equation} \label{volumetd}
\beta^{*}(\dvol_{\Phi}(\widetilde{p})\di \widetilde{s}) \sim \tau^{m+1}h \di\mathcal{S}\di\mathcal{U}\di\mathcal{Z}\di\tau,
\end{equation}
where $h$ is a smooth function of $\widetilde{p} = (x+x^{2}\mathcal{S}\sigma,y+x\mathcal{U}\sigma,z+\sigma\mathcal{Z})$ (cf. \eqref{ABHK3}).
Further, recall that, near $\fd\cup\td$, $x\sim \widetilde{x}$.
In particular this implies
\begin{equation} \label{rmiddle}
d(p,\widetilde{p}) \le c\tau\rho_{\fd}\sqrt{|\mathcal{S}|^{2} + \|\mathcal{U}\|^{2} + \|\mathcal{Z}\|^{2}} =: c\tau x r(\mathcal{S},\mathcal{U},\mathcal{Z}).
\end{equation}
Note that, due to its expression, $r$ is bounded as long as its entries are bounded.  Therefore $G'_{0}r^{\alpha}$ is bounded everywhere. 
Moreover, for every $\widetilde{s}\in T_{-}$, $|\theta-\widetilde{s}| \ge |s-s'|$.  In conclusion the integral $L_1$, can be estimated as follows
\begin{align*}
|L_{1}| & \le  \|u\|_{\alpha}|s-s'|\int_{T_{-}} \int_{M} |\tau^{-3}G'_{0}d(p,\widetilde{p})^{\alpha}|\di\mathcal{S} \di\mathcal{U} \di\mathcal{Z} \di\tau  \\
& \le  C\|u\|_{\alpha}|s-s'|\int_{\sqrt{s-s'}}^{\infty} \int_{M} |\tau^{-3+\alpha}x^{\alpha}G'_{0}r^{\alpha}|\di\mathcal{S} \di\mathcal{U} \di\mathcal{Z} \di\tau  \\
& \le  C\|u\|_{\alpha}|s-s'|^{\alpha/2},
\end{align*}
thus completing the estimates for $L_1$.

\subsection{Estimates for $L_{2}$}

Proceeding exactly as for $L_1$, in the same projective coordinates $(\tau,x,y,z,\mathcal{S},\mathcal{U},\mathcal{Z})$, we recall, from \eqref{ABHK3}, that the asymptotics of $\beta^*G$ are given by $\beta^{*}(G) \sim  \tau^{-m-2}G'_{0}$, with $G'_{0}$ being polyhomogeneous and vanishing to infinite order whenever $\|(\mathcal{S}, \mathcal{U},\mathcal{Z})\|\rightarrow \infty$.
Furthermore, in these coordinates, the lift of the volume form has the expression as in \eqref{volumetd}.
Therefore, by expressing $L_2$ in projective coordinates one finds
\begin{align*}
|L_{2}| & \le  \|u\|_{\alpha}\int_{T_{+}}\int_{M} |\tau^{-1}G'_{0}d(p,\widetilde{p})^{\alpha}|\di\varsigma \di\eta \di\zeta \di\tau \\
& \le  c \|u\|_{\alpha}\int_{T_{+}}\int_{M} |\tau^{-1+\alpha}G'_{0}r^{\alpha}|\di\mathcal{S} \di\mathcal{U} \di\mathcal{Z} \di\tau \\
& \le  c \|u\|_{\alpha}|s-s'|^{\alpha/2},
\end{align*}
concluding the estimates for the $L_{2}$-term.
This completes the estimates for time difference with derivatives under the assumption that $2s'-s\geqslant 0$. 
\medskip

Let us now remove the initial further assumption, meaning that we assume $2s' - s< 0$.  
From the assumption $s > s'\ge 0$ it follows that $2s'<3s$, allowing us to conclude $s'<s <2|s-s'|$.
Therefore, the H\"{o}lder differences in time, under the assumption $2s'-s<0$, can be expressed as
\begin{align*}
\mathbf{G}u(p,s) - \mathbf{G}u(p,s') 
= &\int_{0}^{s}\int_{M} G(s-\widetilde{s},p,\widetilde{p})[u(\widetilde{p},\widetilde{s}) - u(p,\widetilde{s})]\dvol_{\Phi}(\widetilde{p})\di \widetilde{s} \\
&- \int_{0}^{s'}\int_{M} G(s'-\widetilde{s},p,\widetilde{p})[u(\widetilde{p},\widetilde{s}) - u(p,\widetilde{s})]\dvol_{\Phi}(\widetilde{p})\di \widetilde{s}.
\end{align*}
The integrals above can be treated similarly to $L_{2}$.

\begin{rmk}
	This completes the estimates for time differences with derivatives under the assumption $t' < t$.  However, to obtain the estimates for the case $t < t'$, one just need to interchange the roles of $t$ and $t'$.
\end{rmk}
The proof of the estimate claimed in \eqref{HDT} is therefore complete

\section{Estimates for the supremum norm } \label{SNSection}

We finally reached the final step for the proof of Theorem \ref{MappingPropertiesTHM}; consisting in obtaining the estimates for the supremum of $\mathbf{G}u$ claimed in \eqref{SN}.  
For $(p,s)$ in $M\times[0,T]$, we write $\mathbf{G}u(p,s)$ as
\begin{align*}
\mathbf{G}u(p,s) =&  \int_{0}^{s}\int_{M} G(s-\widetilde{s},p,\widetilde{p})u(\widetilde{p},\widetilde{s})\dvol_{\Phi}(\widetilde{p})\di \widetilde{s} \vspace{1mm} \\
=& \int_{0}^{s}\int_{M} G(s-\widetilde{s},p,\widetilde{p})[u(\widetilde{p},\widetilde{s})-u(p,\widetilde{s})]\dvol_{\Phi}(\widetilde{p})\di \widetilde{s} \vspace{1mm} \\
&+\int_0^s\int_M G(s-\widetilde{s},p,\widetilde{p})u(p,\widetilde{s})\dvol_{\Phi}(\widetilde{p})\di \widetilde{s}\\
& = J_1+J_2.
\end{align*}
As for the previous cases, the estimates away from $\fd\cup\td$ are trivial hence they will be omitted.
\medskip

We begin by estimating $J_1$.
In projective coordinates $(\tau,x,y,z,\mathcal{S},\mathcal{U},\mathcal{Z})$, described by 
$$\mathcal{S}=\frac{\widetilde{x}-x}{\tau x^2},\;\;\mathcal{U}=\frac{y-\widetilde{y}}{\tau x},\;\;\mathcal{Z}=\frac{z-\widetilde{z}}{\tau},$$
and in view of \eqref{ABHK3}, one has that the integrand has asymptotic behavior described by
$$\beta^*(G(s-\widetilde{s},p,\widetilde{p})\dvol_{\Phi}(\widetilde{p})\di \widetilde{s})=\textcolor{red}{\tau}^{-1}G'_0 \di \mathcal{S}\di\mathcal{U}\di\mathcal{Z}\di \textcolor{red}{\tau}.$$
Where $G_0$ vanishes to infinite order as $\|(\mathcal{S},\mathcal{U},\mathcal{Z})\|\rightarrow\infty$.
Recall that, near $\fd\cup \td$, $x\sim \tilde{x}$; thus
\begin{align*}
d(p,\widetilde{p})&\sim \tau x r(\mathcal{S},\mathcal{U},\mathcal{Z}),
\end{align*}
where $r=\sqrt{|\mathcal{S}|^2+\|\mathcal{U}\|^2+\|\mathcal{Z}\|^2}$ is bounded for as long as its entries are bounded.
Therefore $J_1$ can be estimated as 
\begin{align*}
|J_{1}| & \le C  \|u\|_{\alpha} \int_{0}^{\sqrt{s}}\int_{M} \tau^{-1}G'_{0}d(p,\widetilde{p})^{\alpha}\di\mathcal{S} \di\mathcal{U} \di\mathcal{Z} \di\tau \\ 
& =  C\|u\|_{\alpha} \int_{0}^{\sqrt{s}}\int_{M}\left|\tau^{-1+\alpha}G'_{0}r(\mathcal{S},\mathcal{U},\mathcal{Z})^{\alpha}\right|\di\mathcal{S} \di\mathcal{U} \di\mathcal{Z} \di\tau \\ 
& \le  C\|u\|_{\alpha}s^{\alpha/2};
\end{align*}
which proves the claimed inequality.
\medskip

The estimate in \eqref{SN} now follows if $J_2$ satisfies a similar estimate to the one for $|J_1|$ above. 
To see this one may argue exactly as we have done for the estimates of $L_4$ in \S \ref{HDTSection}; that is using the fact that $\Phi$-manifolds are stochastically complete and estimating $|u(p,\widetilde{s})|\le \|u\|_{\infty}\le \|u\|_{\alpha}$.

\section{Short-time existence and regularity of solutions}\label{ShortTimeExistenceSec}

We finish this paper by presenting a direct application of the mapping properties in Theorem \ref{mappropH}.
Namely we prove that the following Cauchy problem for a semi-linear heat equation 
\begin{equation}\label{cauchyshort}
(\partial_{t} + \Delta_{\Phi})u = F(u), \;\; u|_{t=0}=0;
\end{equation}
with the operator $F$ subject to some restrictions, on $\Phi$-manifolds admits a unique solution for short time.
\medskip

First let us lay down the suitable assumptions for the operator $F$:
\begin{enumerate}
\item $F:x^{\gamma}C^{k+2,\alpha}_{\Phi}(M\times [0,T])\rightarrow C^{k,\alpha}_{\Phi}(M\times [0,T])$;
\item $F$ can be written as a sum $F = F_{1} + F_{2}$ with 
\begin{itemize}
\item[i)]$F_{1}:x^{\gamma}C^{k+2,\alpha}_{\Phi}\rightarrow x^{\gamma}C^{k+1,\alpha}_{\Phi}(M\times [0,T]),$
\item[ii)] $F_{2}:x^{\gamma}C^{k+2,\alpha}_{\Phi}\rightarrow x^{\gamma}C^{k,\alpha}_{\Phi}(M\times [0,T]);$
\end{itemize}
\item Let $u,u' \in x^{\gamma}C^{k+2,\alpha}_{\Phi}(M\times [0,T])$ have $(k+2,\alpha,\gamma)$-norm bounded from above by some $\eta>0$; that is $\|u\|_{k+2,\alpha,\gamma},\|u'\|_{k+2,\alpha,\gamma} \le \eta$.
Then there exists some $C_{\eta}>0$ such that
\begin{itemize}
\item[i)] $\|F_{1}(u) - F_{1}(u')\|_{k+1,\alpha,\gamma} \le C_{\eta}\|u-u'\|_{k+2,\alpha,\gamma}$, $\|F_{1}(u)\|_{k+1,\alpha,\gamma} \le C_{\eta}\|u\|_{k+2,\gamma,\alpha},$
\item[ii)] $\|F_{2}(u) - F_{2}(u')\|_{k,\alpha,\gamma} \le C_{\eta}\max\{\|u\|_{k+2,\alpha,\gamma},\|u'\|_{k+2,\alpha,\gamma}\}\|u-u'\|_{k+2,\alpha,\gamma}$, \newline $\|F_{2}(u)\|_{k,\alpha,\gamma} \le C_{\eta}\|u\|^{2}_{k+2,\alpha,\gamma}.$
\end{itemize}
\end{enumerate}
\medskip
Note that, due to Theorem \ref{mappropH}, if there exists some $u^*\in x^\gamma C^{k+2,\alpha}_{\Phi}(M\times[0,T])$ so that 
$$\mathbf{H}\left(F(u^*)\right)=u^*,$$
then $u^*$ is a solution of \eqref{cauchyshort}.
We have successfully transformed the problem of finding a solution to a non-linear Cauchy problem of the form \eqref{cauchyshort} into a fixed-point problem.
Existence of fixed-points of maps on Banach spaces is guaranteed by Banach's fixed-point Theorem.
Thus, the core of the proof of Corollary \ref{theorem4}, which we will recall here for convenience of the reader, will consist on an application of Banach's fixed-point Theorem.
\begin{thm}[Corollary \ref{theorem4}]
Consider the Cauchy problem for a semi-linear heat equation 
\begin{equation}\label{CauchyProbNonLin1}
(\partial_t+\Delta_{\Phi})u=F(u),\;\;u|_{t=0}=0.
\end{equation}
Assume $F$ to satisfy the conditions $(1)$, $(2)$ and $(3)$ above.  
Then there exists a unique $u^*\in x^\gamma C^{k+2,\alpha}_{\Phi}(M\times[0,T'])$ solution of \eqref{CauchyProbNonLin1} for some $T'$ sufficiently small. 
\end{thm}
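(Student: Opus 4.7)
The plan is to rewrite \eqref{CauchyProbNonLin1} as the fixed-point equation $u=\Psi(u):=\mathbf{H}(F(u))=\Psi_1(u)+\Psi_2(u)$, with $\Psi_i(u):=\mathbf{H}(F_i(u))$, and apply Banach's fixed-point theorem. Since $\mathbf{H}v|_{t=0}=0$ automatically, the initial condition $u|_{t=0}=0$ is built-in, so it suffices to find $\rho,T'>0$ small enough that $\Psi$ is a contraction self-map of the closed ball $B_\rho\subset x^\gamma C^{k+2,\alpha}_\Phi(M\times[0,T'])$ of radius $\rho$ centred at the origin; the unique fixed point is then the desired $u^*$, since $(\partial_t+\Delta_\Phi)\mathbf{H}=\operatorname{id}$ makes a fixed point a genuine solution of the PDE.

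For $\Psi_2$ the smallness is extracted from the quadratic behaviour of $F_2$. Using the plain mapping property $\mathbf{H}:x^\gamma C^{k,\alpha}_\Phi\to x^\gamma C^{k+2,\alpha}_\Phi$ from Theorem \ref{mappropH} together with the bounds on $F_2$, one obtains on $B_\rho$ (with $\rho\le\mu$)
\[
\|\Psi_2(u)\|_{k+2,\alpha,\gamma}\le CC_\mu\rho^2,\qquad \|\Psi_2(u)-\Psi_2(u')\|_{k+2,\alpha,\gamma}\le CC_\mu\rho\,\|u-u'\|_{k+2,\alpha,\gamma},
\]
both of which become arbitrarily small by shrinking $\rho$, independently of $T'$. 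For $\Psi_1$ the smallness instead comes from $T'$: apply the refined mapping property $\mathbf{H}:x^\gamma C^{k+1,\alpha}_\Phi\to\sqrt{t}\,x^\gamma C^{k+2,\alpha}_\Phi$ of Theorem \ref{mappropH} with index $k+1$, which yields a factorisation $\Psi_1(u)=\sqrt{t}\,g_u$ with $\|g_u\|_{k+2,\alpha,\gamma}\le C'C_\mu$ and $\|g_u-g_{u'}\|_{k+2,\alpha,\gamma}\le C'C_\mu\|u-u'\|_{k+2,\alpha,\gamma}$, using the Lipschitz bound on $F_1$. Because multiplication by $\sqrt{t}$ acts on $x^\gamma C^{k+2,\alpha}_\Phi(M\times[0,T'])$ with operator norm $\phi(T')\to 0$ as $T'\to 0^+$ (exploiting $|\sqrt{t}-\sqrt{t'}|\le\sqrt{|t-t'|}$ on $[0,T']$ together with $\sqrt{t}\le\sqrt{T'}$), this translates to
\[
\|\Psi_1(u)\|_{k+2,\alpha,\gamma}\le C'C_\mu\phi(T'),\qquad \|\Psi_1(u)-\Psi_1(u')\|_{k+2,\alpha,\gamma}\le C'C_\mu\phi(T')\,\|u-u'\|_{k+2,\alpha,\gamma}.
\]

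First fix $\rho\le\mu$ with $CC_\mu\rho\le 1/4$; then pick $T'>0$ so small that $C'C_\mu\phi(T')\le\min\{\rho/2,\,1/4\}$. Summing the two pairs of estimates shows that $\Psi:B_\rho\to B_\rho$ and that $\Psi$ is $\tfrac{1}{2}$-Lipschitz, so Banach's fixed-point theorem furnishes a unique $u^*\in B_\rho$ with $\Psi(u^*)=u^*$; this is the sought solution of \eqref{CauchyProbNonLin1} on $[0,T']$. Uniqueness in the whole space $x^\gamma C^{k+2,\alpha}_\Phi(M\times[0,T'])$ then follows by noting that any competing solution must lie in $B_\rho$ on a sufficiently short initial subinterval, hence coincides with $u^*$ there, and the equality propagates forward in time.

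The main technical obstacle is justifying that multiplication by $\sqrt{t}$ has operator norm $\phi(T')\to 0$ on $x^\gamma C^{k+2,\alpha}_\Phi(M\times[0,T'])$. Supremum norms are controlled directly by $|\sqrt{t}|\le\sqrt{T'}$, but time derivatives (counted as second order in this parabolic H\"{o}lder space) produce $\partial_t(\sqrt{t}\,g)=g/(2\sqrt{t})+\sqrt{t}\,\partial_t g$, whose first summand is singular at $t=0$. This apparent obstruction is resolved by observing that the genuine object $\Psi_1(u)$ actually lies in the strictly better space $x^\gamma C^{k+3,\alpha}_\Phi$ by the plain mapping property applied with index $k+1$; a short interpolation between its uniformly bounded $(k+3,\alpha,\gamma)$-norm and its $\sqrt{T'}$-small supremum norm then yields the required $\phi(T')$. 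With that verified, the rest of the argument is a standard application of the contraction principle.
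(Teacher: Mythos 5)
Your proposal is correct and follows essentially the same route as the paper: both recast \eqref{CauchyProbNonLin1} as the fixed-point equation $u=\mathbf{H}(F(u))$, split $\Psi=\Psi_1+\Psi_2$ according to $F=F_1+F_2$, extract smallness for $\Psi_2$ from the quadratic bounds by shrinking the radius and for $\Psi_1$ from the gain of $\sqrt{t}$ in the refined mapping property of Theorem \ref{mappropH} by shrinking $T'$, and conclude with Banach's fixed-point theorem on a small closed ball. If anything you are more scrupulous than the paper at the two points it glosses over --- the boundedness of multiplication by $\sqrt{t}$ on the parabolic H\"older space (where the paper simply asserts $\|\sqrt{t}\,g\|_{k+2,\alpha,\gamma}\le\sqrt{T}\|g\|_{k+2,\alpha,\gamma}$) and uniqueness outside the ball $Z_{\eta,T}$ --- though your interpolation fix for the former still leaves the top-order $\alpha/2$-H\"older-in-time seminorm to be checked, since the $(k+3,\alpha)$-control does not dominate a time derivative of an order-$(k+2)$ quantity.
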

\begin{proof}
Let $\eta$ and $T$ be positive numbers to be specified later and set
\begin{equation*}
Z_{\eta,T}:= \left\{u \in x^{\gamma}C^{k+2,\alpha}_{\Phi}(M\times [0,T])\;\big| \; u|_{t=0}=0, \;\|u\|_{k+2,\alpha,\gamma} \le \eta\right\}.
\end{equation*}
$Z_{\eta,T}$ is a closed subset of the Banach space, hence a complete metric space.
For $\mathbf{H}$ as in Theorem \ref{mappropH}, consider the map $\Psi(u) := (\mathbf{H}\circ F)(u)$.
The assumptions on $F$ implies that $\Psi$ maps $x^{\gamma}C^{k+2,\alpha}_{\Phi}(M\times [0,T])$ to itself. 
As mentioned earlier, our aim is to prove $\Psi$ to be a contraction on $Z_{\eta,T}$, for some $\eta$ and $T$ sufficiently small.
Due to linearity of $\mathbf{H}$ we can prove $\Psi_1=\mathbf{H}\circ F_1$ and $\Psi_2=\mathbf{H}\circ F_2$ to be contractions on $Z_{\eta,T}$ instead.
For simplicity let us denote by $\mathbf{C}$ the number
$$\mathbf{C}:=\frac{1}{3\|\mathbf{H}\|_{\op}C_{\eta}}.$$
\medskip

Let us first prove that $\Psi_1$ and $\Psi_2$ map $Z_{\eta,T}$ to itself.
We begin with $\Psi_1$.  
By requiring $T\le \mathbf{C}^2$ one has, for every $u\in Z_{\eta,T}$,
\begin{equation}\label{LastEq1}
\|\Psi_{1}(u)\|_{k+2,\alpha,\gamma} \le \|\mathbf{H}\|_{\op}\sqrt{T}\|F_{1}(u)\|_{k+1,\alpha,\gamma} \le \|\mathbf{H}\|_{\op}\sqrt{T}C_{\eta}\|u\|_{k+2,\alpha,\gamma} \le \frac{\eta}{3}.
\end{equation}
In the above the first estimate follows from the second displayed mapping property of the operator $\mathbf{H}$ in Theorem \ref{mappropH} while the second follows from the assumption on $F_1$.

\medskip
For $\Psi_2$ we argue in a similar manner. 
Let $u$ be an element in $Z_{\eta,T}$.
One has that the chain of inequalities
\begin{equation}\label{LastEq2}
\|\Psi_{2}(u)\|_{k+2,\alpha,\gamma} \le \|\mathbf{H}\|_{\op}\|F_{2}(u)\|_{k,\alpha,\gamma} \le \|\mathbf{H}\|_{\op}C_{\mu}\|u\|^{2}_{k+2,\alpha,\gamma} \le \frac{\eta}{3},
\end{equation}
holds by choosing $\eta\le \mathbf{C}$.
Contrarily to the previous case, here the first estimate follows from the first displayed mapping property of $\mathbf{H}$ in Theorem \ref{mappropH}.

\medskip
It is then clear that, by choosing $\eta\le \mathbf{C}$ and $T\le \mathbf{C}^2$ then \eqref{LastEq1} and \eqref{LastEq2} are both satisfied, resulting in $\Psi=\Psi_1+\Psi_2$ mapping $Z_{\eta,T}$ to itself.
\medskip

The only thing left to prove is $\Psi$ to be a Lipschitz function with Lipschitz constant less than $1$.
For $\eta$ and $T$ as above one sees that, arguing in the exact same way as before:
\begin{align*}
\|\Psi_{1} (u) - \Psi_{1}(u')\|_{k+2,\alpha,\gamma} \le \|\mathbf{H}\|_{\op}\sqrt{T}C_{\eta}\|u-u'\|_{k+2,\alpha,\gamma} \le \dfrac{1}{3}\|u-u'\|_{k+2,\alpha,\gamma}
\end{align*}
and
\begin{align*}
\|\Psi_{2} (u) - \Psi_{2}(u')\|_{k+2,\alpha,\gamma} &\le \|\mathbf{H}\|_{\op}C_{\eta}\max\{\|u\|_{k+2,\alpha,\gamma},\|u'\|_{k+2,\alpha,\gamma}\}\|u-u'\|_{k+2,\alpha,\gamma} \\
&\le \dfrac{1}{3}\|u-u'\|_{k+2,\alpha,\gamma}.
\end{align*}
The above imply, in particular, $\|\Psi(u)-\Psi(u')\|_{k+2,\alpha,\gamma}\le 2/3\|u-u'\|_{k+2,\alpha,\gamma}$. 
Hence $\Psi$ is a Lipschitz function with Lipschitz constant $2/3<1$, thus showing $\Psi$ to be a contraction.
\end{proof}

\bibliographystyle{amsalpha-lmp}

\end{document}